\newtheorem{Proposition}{Proposition}[section]
\newtheorem{Lemma}[Proposition]{Lemma}
\newtheorem{Theorem}[Proposition]{Theorem}
\newtheorem{Corollary}[Proposition]{Corollary}
\newtheorem{Remark}[Proposition]{Remark}
\newtheorem{Problem}[Proposition]{Problem}
\newbox\squ  % box character for ends of proofs
\def\F{\mathfrak{F}}
\def\cP{\mathcal{P}}
\def\cRP{\mathcal{RP}}
\def\gmod#1{\operatorname{Rep}(#1)}
\def\C{{\mathbb C}}
\def\bbF{{\mathbb F}}
\def\Q{{\mathbb Q}}
\def\Z{{\mathbb Z}}
\def\0{{\bar 0}}
\def\1{{\bar 1}}
\def\St{{\mathscr{T}}}
\def\T{{\mathtt T}}
\def\HOM{{\operatorname{HOM}}}
\def\res{{\operatorname{res}\,}}
\def\Res{{\operatorname{res}\:}}
\def\sh{{\operatorname{sh}}}
\def\cha{{\operatorname{char}\,}}
\def\chq{{\operatorname{ch}_q\:}}
\def\qdim{{\operatorname{qdim}\:}}
\def\bi{\text{\boldmath$i$}}
\def\bj{\text{\boldmath$j$}}
\def\bA{\text{\boldmath$A$}}
\def\bB{\text{\boldmath$B$}}
\def\phi{{\varphi}}
\def\emptyset{{\varnothing}}
\def\la{{\lambda}}
\def\La{{\Lambda}}
\def\underbar{\mathpalette\@underbar}
\def\@underbar#1#2{\settowidth{\@tempdimb}{$#1#2$}\@tempdimb=0.8\@tempdimb
                   \ooalign{$#1#2$\crcr%
                         \hfil\rule[-.5mm]{\@tempdimb}{.4pt}\hfil}}
\newdimen\hoogte    \hoogte=14pt    % hoogte  van hokje
\newdimen\breedte   \breedte=14pt   % breedte van hokje
\newdimen\dikte     \dikte=0.5pt    % dikte lijn
\newenvironment{young}{\begingroup
       \def\vr{\vrule height0.8\hoogte width\dikte depth 0.2\hoogte}
       \def\fbox##1{\vbox{\offinterlineskip
                    \hrule height\dikte
                    \hbox to \breedte{\vr\hfill##1\hfill\vr}
                    \hrule height\dikte}}
       \vbox\bgroup \offinterlineskip \tabskip=-\dikte \lineskip=-\dikte
            \halign\bgroup &\fbox{##\unskip}\unskip  \crcr }
       {\egroup\egroup\endgroup}
\def\diagram#1{\relax\ifmmode\vcenter{\,\begin{young}#1\end{young}\,}\else%
              $\vcenter{\,\begin{young}#1\end{young}\,}$\fi}
\begin{document}

\title[An interpretation of the LLT algorithm]{An interpretation of the Lascoux-Leclerc-Thibon algorithm and graded representation theory}
\author{Alexander Kleshchev and David Nash}

\begin{abstract}
We use graded Specht modules  to calculate the graded decomposition numbers for the Iwahori-Hecke algebra of the symmetric group over a field of characteristic zero at a root of unity.  The algorithm arrived at is the Lascoux-Leclerc-Thibon algorithm in disguise.  Thus we interpret the algorithm in terms of graded representation theory.

\end{abstract}
\thanks{Supported in part by the NSF grant %no. 
DMS-0654147. The paper was completed while the authors were visiting the Isaac Newton Institute for Mathematical Sciences in Cambridge, U.K., whom we thank for hospitality and support.}
\thanks{}
\address{Department of Mathematics, University of Oregon, Eugene, USA.}
\email{klesh@uoregon.edu}
\address{Department of Mathematics, University of Oregon, Eugene, USA.}
\email{dnash@uoregon.edu}
\maketitle

\section{Introduction}
Let $H_d$ be the Hecke algebra of the symmetric group $S_d$ defined over $\C$ with a parameter $\xi$ which is a primitive $e^\text{th}$ root of unity. The problem of finding  dimensions of the irreducible $H_d$-modules is equivalent to the problem of finding decomposition numbers for $H_d$. This problem has been solved by Ariki \cite{Ariki} who proved a conjecture of Lascoux, Leclerc and Thibon \cite{LLT} which gives an algorithm for computing the decomposition numbers. 

Recently, Brundan and the first author \cite{BK,BK2} have defined {\em graded decomposition numbers} for $H_d$ and proved a graded analogue of Ariki's theorem. The grading provides new information about $H_d$-modules, which is collected into a {\em graded character} of a module. Graded characters of Specht modules were computed in \cite{BKW}. This together with the main results of \cite{BK2} is sufficient to yield an algorithm for computing graded decomposition numbers for $H_d$. 

Despite the different approach, the algorithm is equivalent to the one suggested in \cite{LLT}, although this equivalence is not immediately obvious. In section~\ref{SLLT} we explain the equivalence of the two algorithms. Note that our approach gives a new interpretation of some coefficients computed in the LLT algorithm. 
We direct the reader to \cite{L} for examples of such computations.

\vspace{1 mm}
\noindent
{\bf Acknowledgement.} In the previous version of the paper we missed the equivalence of our algorithm and the LLT algorithm. We are grateful to the referee for pointing this out. 

\section{Preliminaries}

\subsection{Basic objects}
Let $\bbF$ be an algebraically closed field and $\xi \in \bbF^\times$.  The \emph{quantum characteristic}, $e$, is the smallest positive integer such that 
$1 + \xi + \xi^2 + \cdots + \xi^{e-1} = 0,$
where we set $e:=0$ if no such integer exists. Set 
$I:=\Z/e\Z.$  
For any $i \in I$ we have a well-defined element %$\nu(i)\in\bbF$:
$$\nu(i):= \begin{cases}
i &  \text{if $\xi=1$ },\\
\xi^i &  \text{if $\xi \neq 1$ },\\
 \end{cases}$$
of $\bbF$. Throughout the paper, $q$ is an indeterminate and we define the \emph{bar-involution} on $\Z[q, q^{-1}]$ by $\overline{p(q)}=p(q^{-1})$ for all $p(q) \in \Z[q, q^{-1}]$.  We %also 
set
 $$[n]_q:=\frac{q^n-q^{-n}}{q-q^{-1}} \text{~and \hspace{.1cm}} [n]_q^!:=[n]_q[n-1]_q\cdots [1]_q.$$
 
%\subsection{Hecke algebras}

The \emph{Iwahori-Hecke algebra} of $S_d$ with parameter $\xi$, is the $\bbF$-algebra  $H_d=H_d(\bbF, \xi)$ with generators
$T_1, T_2, \dots, T_{d-1}$
and relations
$$T_r^2 = (\xi - 1)T_r + \xi \:\: \text{~~~~~($1 \leq r <d$)},$$
$$T_rT_{r+1}T_r = T_{r+1}T_rT_{r+1} \: \text{~~($1 \leq r < d-1$)},$$
$$T_rT_s=T_sT_r \:\: \text{~~~~($1 \leq r,s <d$, $|r-s|>1$)}.$$
The group algebra $\bbF S_d$ of the symmetric group appears in the special case when $\xi=1$. In this case $e=\cha{\bbF}$. 

The \emph{Jucys-Murphy elements} of $H_d$ are: 
$$L_r = \begin{cases}
 (1,r) + (2,r) + \cdots + (r-1,r) &  \text{if $\xi = 1$ }\\
\xi^{1-r}T_{r-1}\cdots T_2T_1T_1T_2\cdots T_{r-1} &  \text{if $\xi \neq 1$}\\
 \end{cases}
 \quad (1\leq r\leq d).$$
It follows from \cite[Lemma 4.7]{G} and \cite[Lemma 7.1.2]{K} that the 
eigenvalues of $L_1, \dots, L_d$ on any finite dimensional $H_d$-module are of the form $\nu(i)$ for $i \in I$. 
For $\bi=(i_1,\dots,i_d) \in I^d$ and a finite dimensional $H_d$-module $V$, we define the \emph{$\bi$-weight space} of $V$ to be
$$V_\bi:=\{v \in V | (L_r - \nu(i_r))^N v = 0 \text{~for~} N\gg0 \text{~and~} r=1,\dots,d\}.$$
Then we have the \emph{weight space decomposition} 
$V= \oplus_{\bi \in I^d} V_\bi.$

\subsection{Partitions and tableaux} We use the partition notation from \cite{BK2}.  
In particular, $\cP_d$ is the set of all partitions of  $d$.  Given a partition $\lambda=(\lambda_1,\la_2,\dots) \in \cP_d$, we define
$\sigma_k(\lambda) = \sum_{i=1}^k \lambda_i.$ 
Denote the usual \emph{dominance order} on $\cP_d$ by `$\unrhd$', see \cite{JamesBook}.  %We typically identify a partition $\lambda$ with its \emph{Young diagram} and refer to the \emph{nodes} of $\lambda$.
A partition $\lambda=(\lambda_1, \lambda_2, \dots)$ is called \emph{$e$-restricted} if $\lambda_r - \lambda_{r+1} < e$ for all $r=1,2,\dots$.  We let $\cRP_d \subset \cP_d$ be the subset of all $e$-restricted partitions of $d$.

Let $\la\in \cP_d$. A node $A\in\lambda$ is called \emph{removable $($for $\lambda)$} if $\lambda \setminus\{A\}$ has the shape of a partition.  A node $B {\not} {\in} \lambda$ is called \emph{addable $($for $\lambda)$} if $\lambda \cup \{B\}$ has the shape of a partition.  Given any set of removable nodes $\bA = \{A_1,\dots,A_m\}$ for $\lambda$, we denote $\lambda_{\bA}:=\lambda \setminus \{A_1,A_2, \dots, A_m\}$.
%Similarly, for any set of addable nodes $\bB= \{B_1,\dots,B_m\}$ for $\lambda$, we denote $\lambda^{\bB}:=\lambda \cup \{B_1,B_2, \dots, B_m\}$.

%Given a fixed choice of integer $e \geq 0$, t
If the node $A$ is in row $a$ and column $b$, we write $A=(a,b)$, and then the {\em residue} of $A$ is defined to be  
$\Res A:=b-a\pmod{e}\in I.$ For $i\in I$, 
a  node $A$ is called an \emph{$i$-node} if $\Res A = i$.  
For $\lambda, \mu \in \cP_d$, we write $\lambda \sim \mu$ if and only if for each $i\in I$ the number of $i$-nodes in $\la$ is equal to that in $\mu$.

%Theorem~\ref{OldFund} motivates the following definition. 
Given $\mu \in \cP_d$ we call $\lambda \in \cP_d$ a \emph{move for $\mu$} if $\lambda \unlhd \mu$ and $\lambda \sim \mu$.  We denote the set of moves for $\mu$ by $M(\mu)$. 
%If $\lambda\in M(\mu)$, p
Put $M(\mu,\lambda):=\{\nu \in M(\mu) | \lambda \in M(\nu)\}$. Note that if $\la$ is not a move of $\mu$ then $M(\mu,\la)$ is empty. 
%for the set of moves ``between" $\mu$ and $\lambda$.  
%The standard dominance order on $\cP_d$ gives a partial order on $M(\mu)$, so it makes sense to refer to \emph{minimal} non-trivial moves for $\mu$.  
%A move $\lambda\in M(\mu)$ is a {\em minimal move} for $\mu$ if and only if $\la\neq \mu$ and $M(\mu,\lambda) = \{\mu, \lambda\}$.
For $\lambda \in M(\mu)$, we define the \emph{distance} between $\lambda$ and $\mu$ to be
$$\l(\mu,\lambda):=\sum_{k \geq 1} \sigma_k(\mu)-\sigma_k(\lambda).$$
Since $\lambda \unlhd \mu$, we know that $\sigma_k(\lambda) \leq \sigma_k(\mu)$ for all $k \geq 1$.  So $l(\mu,\lambda) \geq 0$, with equality if and only if $\lambda=\mu$.  Moreover, if $\nu \in M(\mu,\lambda)$, then $l(\mu,\nu) \leq l(\mu,\lambda)$ with equality if and only if $\nu=\lambda$.  %It is therefore appropriate to refer to $l(\mu,\lambda)$ as the distance between $\mu$ and $\lambda$.

Following \cite{James}, for $m\in\Z_{>0}$, we define the $m^\text{th}$ \emph{ladder} $L_m$ as the set of nodes of the form $(1+k, m-k(e-1))$ for all non-negative integers $k$ with $k<m/(e-1)$. Informally,  the ladders are straight lines with slope $1/e$. Note that our ladders are transposed to those of James, since we are using the newer Dipper-James-Mathas notation for Specht modules. 
The nodes in the ladder $L_m$ all have the residue $m-1\pmod{e}$, which we refer to as the {\em residue} of the ladder, and denote $\Res L_m$.
For a partition $\lambda$ and a positive integer $m$, we set $r_m(\lambda):=|\lambda \cap L_m|$.  Denote by $t_{\lambda}$ the maximal index such that $r_{t_{\lambda}}(\lambda)\neq 0$, and refer to the ladders $L_1,L_2,\dots,L_{t_{\lambda}}$ as the {\em ladders of $\lambda$} (some of them could have trivial intersection with $\la$).  %We refer to $L_{t_{\lambda}}$ as the {\em bottom ladder of $\la$}. 
A ladder $L_m$ is \emph{bottom complete for $\lambda$} if whenever a node $A=(a,b)\in L_m$ belongs to $\la$, all other nodes $(a',b')\in L_m$ with $a'>a$ also belong to $\la$. 

\begin{Lemma}\label{BottomComp}\cite[1.2]{James} Let $\lambda \in \cP_d$.  Then $\lambda$ is $e$-restricted if and only if all ladders are bottom complete for $\lambda$.
\end{Lemma}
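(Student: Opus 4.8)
The plan is to reduce both conditions to the single numerical inequality $\lambda_r-\lambda_{r+1}\le e-1$ and then argue by a direct definition chase, using the explicit parametrization of the ladders $L_m$. Throughout one works with $e\ge 2$, which is the only case relevant here (the values $e\in\{0,1\}$ are degenerate and can be checked by hand).

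First I would make the shape of a ladder explicit. The node of $L_m$ lying in row $a$ is $(a,\,m-(a-1)(e-1))$, and it is present exactly when $m-(a-1)(e-1)\ge 1$; as $a$ increases by one, the column of the ladder node drops by $e-1\ge 1$. Hence the nodes of $L_m$ fill out a set of consecutive rows $1,2,\dots,a_{\max}$, and ``lying lower on $L_m$'' means precisely ``lying in a strictly larger row''. Spelling out the definition, $L_m$ is bottom complete for $\lambda$ if and only if: whenever $\lambda_a\ge m-(a-1)(e-1)$ and the row-$a'$ node of $L_m$ is present for some $a'>a$, one also has $\lambda_{a'}\ge m-(a'-1)(e-1)$.

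For the implication ``$e$-restricted $\Rightarrow$ every $L_m$ is bottom complete'', one would fix $m$, suppose $(a,b)\in L_m\cap\lambda$ with $b=m-(a-1)(e-1)$, and take $a'>a$ such that the row-$a'$ node $(a',b')$ of $L_m$, with $b'=m-(a'-1)(e-1)\ge 1$, is present. Telescoping the $e$-restrictedness inequalities $\lambda_j-\lambda_{j+1}\le e-1$ over $j=a,\dots,a'-1$ gives $\lambda_a-\lambda_{a'}\le (a'-a)(e-1)$, so $\lambda_{a'}\ge \lambda_a-(a'-a)(e-1)\ge b-(a'-a)(e-1)=b'$, i.e. $(a',b')\in\lambda$, as required. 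For the converse I would prove the contrapositive: if $\lambda$ is not $e$-restricted, choose $r$ with $\lambda_r-\lambda_{r+1}\ge e$. Then $\lambda_r\ge e$, so $A=(r,\lambda_r)$ is a node of $\lambda$, and $A$ lies on $L_m$ for $m:=\lambda_r+(r-1)(e-1)$. The row-$(r+1)$ node of $L_m$ is $B=(r+1,\,\lambda_r-(e-1))$, which is present since $\lambda_r-(e-1)\ge 1$; but $\lambda_{r+1}\le\lambda_r-e<\lambda_r-(e-1)$ shows $B\notin\lambda$. So $L_m$ is not bottom complete for $\lambda$, and the equivalence follows.

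There is no genuinely hard step: the whole argument is a definition chase. The one point demanding care is keeping track of \emph{which rows a given ladder actually reaches} --- i.e. keeping the column $m-(a-1)(e-1)$ positive --- so that in the forward direction one only asserts membership for nodes that honestly lie on $L_m$, and in the converse one checks that the offending node $B$ is indeed a node of $L_m$. Beyond the definitions of the ladders and of $e$-restrictedness, no further input is needed.
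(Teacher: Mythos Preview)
Your argument is correct; the definition chase is exactly the right approach, and you handle the only subtle point (ensuring the lower ladder node $B$ actually exists, i.e.\ has positive column index) carefully. There is nothing to compare against: the paper does not prove this lemma but simply cites \cite[1.2]{James}, so your write-up in fact supplies a self-contained proof where the paper gives none.
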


Given nodes $A=(a_1,a_2)$ and $B=(b_1,b_2)$ we say $A$ is \emph{above} (resp. \emph{below}) $B$ if $a_1<b_1$ (resp. $a_1>b_1$). Let $\la\in \cP_d$. For a removable $i$-node $A$ in $\la$ we define the \emph{degree of $A$} to be:
\begin{equation*}
\begin{split}
d_A(\lambda) := 
\#\{\text{addable $i$-nodes
below $A$}\}\\
-\#\{\text{removable $i$-nodes
below $A$}\}.
\end{split}
\end{equation*}
%Similarly, for an addable $i$-node $B \not \in \la$, we define the degree of $B$ to be:
%\begin{equation*}
%\begin{split}
%d^B(\lambda) := 
%\#\{\text{addable $i$-nodes
%above $B$}\}\\
%-\#\{\text{removable $i$-nodes
%above $B$}\}.
%\end{split}
%\end{equation*}

For $\lambda\in \cP_d$, let $\St(\lambda)$ denote the set of all standard $\lambda$-tableaux. 
Let $\T\in\St(\lambda)$.  For $s \in \Z_{\geq 0}$ we denote by $\T_{\leq s}$ and $\sh(T_{\leq s})$ the tableau obtained by retaining the nodes of $\T$ labeled by the numbers $1,\dots,s$ and its shape respectively, so that $\T_{\leq s}\in\St(\sh(T_{\leq s}))$.
Let $A$ be the node of $\lambda$ labeled by $d$ in $\T$. 
%Denote by $\T_{\leq d-1}$ the standard $\lambda_A$-tableau obtained from $\T$ by removing the node $A$. 
Following \cite{BKW}, we define the degree of $\T$ inductively by:
$$\deg(\T) = \begin{cases}
d_A(\lambda) + \deg(\T_{\leq d-1}) &  \text{if $d>0$},\\
0 &  \text{if $d=0$}.\\
\end{cases}$$
For $\T \in \St(\lambda)$ we also have the associated \emph{residue sequence}: 
$$\bi^\T=(i_1,\dots,i_d) \in I^d,$$
where $i_r$ is the residue of the node labeled by $r$ in $\T$ for $1 \leq r \leq d$.

\subsection{\boldmath Representation theory of $H_d$}\label{RepTheory}
From the work of Dipper, James and Mathas \cite{DJM}, the algebra $H_d$ has a special family of modules $\{S(\mu)\mid\mu\in\cP_d\}$,  labeled by the partitions of $d$, called \emph{Specht modules}. This goes back to Dipper-James \cite{DJ}, although the Specht modules defined in \cite{DJ} are different from those in \cite{DJM}. Here we follow the conventions of \cite{DJM}. 

If $e=0$, then $\{S(\mu) \mid \mu \in \cP_d\}$ is a complete irredundant set of the irreducible $H_d$-modules.  In the more interesting case $e>0$, the head $D(\mu)$ of $S(\mu)$ is irreducible, provided $\mu\in\cRP_d$, and  
$\{D(\mu)\mid\mu \in \cRP_d\}$
is a complete irredundant set of the irreducible $H_d$-modules.  

In this paper, grading always means $\Z$-grading. The algebra $H_d$ has an explicit grading exhibited in \cite{BK}. Therefore we may  speak of {\em graded} $H_d$-modules. In \cite[\S2.7]{BK2}, a notion of graded duality $\circledast$ on finite dimensional graded $H_d$-modules is introduced. It is then shown that each irreducible $H_d$-module $D(\mu)$ has a unique grading which makes it into a {\em graded} $H_d$-module with $D(\mu)^\circledast\cong D(\mu)$. Moreover, by \cite{BKW}, the Specht modules are also gradable as $H_d$-modules in such a way that the natural projection $S(\mu)\twoheadrightarrow D(\mu)$ is a degree zero map whenever $\mu \in \cRP_d$.

If $V=\bigoplus_{m\in \Z} V_m$ is a finite dimensional graded vector space, define $\qdim V:=\sum _{m\in \Z} (\dim V_m)q^m\in\Z[q,q^{-1}]$. 
Let $\mathscr{C}$ be the free $\Z[q,q^{-1}]$-module on $I^d$. Given a finite dimensional graded $H_d$-module $V$, we define its \emph{graded character} to be 
$$\chq V:=\sum_{\bi \in I^d} (\qdim V_\bi) \cdot \bi \in \mathscr{C}$$

The graded characters of Specht modules are now as follows:

\begin{Theorem}\label{Specht}{\rm \cite[\S4.3]{BKW}} Let $\mu\in \cP_d$. Then $\chq S(\mu) = \sum_{\T \in \St(\mu)}q^{\deg(\T)}\bi^{\T}.$
\end{Theorem}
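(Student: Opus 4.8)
\medskip
\noindent\textit{Proof sketch.}\quad
The plan is to transport the computation to the Khovanov--Lauda--Rouquier side. By the graded isomorphism theorem of Brundan and Kleshchev \cite{BK}, $H_d$ is identified with the cyclotomic KLR algebra $R^{\Lambda_0}_d$ attached to the quiver on the vertex set $I$ (the cyclic quiver of type $A^{(1)}_{e-1}$ when $e>0$, the linear quiver $A_\infty$ when $e=0$), in such a way that the standard idempotents $e(\bi)$, $\bi\in I^d$, cut out the weight spaces, $e(\bi)V=V_\bi$, for every finite dimensional $H_d$-module $V$. Hence $\chq V=\sum_{\bi\in I^d}\bigl(\qdim e(\bi)V\bigr)\,\bi$, and Theorem~\ref{Specht} amounts to the assertion that $S(\mu)$ admits a homogeneous basis $\{v_\T\mid \T\in\St(\mu)\}$ with $v_\T\in e(\bi^\T)S(\mu)$ and $\deg v_\T=\deg(\T)$.

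To produce such a basis I would use the homogeneous presentation of the Specht module: it is cyclic, generated by a vector $v_\mu$ which is homogeneous of degree $\deg(\T^\mu)$ (where $\T^\mu$ is the row-reading standard $\mu$-tableau), lies in $e(\bi^{\T^\mu})S(\mu)$, is annihilated by all the $y_r$, is annihilated by $\psi_r$ whenever $r$ and $r+1$ lie in the same row of $\T^\mu$, and satisfies the homogeneous Garnir relations. For $\T\in\St(\mu)$, let $w_\T$ be the shortest permutation with $w_\T\,\T^\mu=\T$, fix a reduced word for it, and set $v_\T:=\psi_{w_\T}v_\mu$. Three points then have to be checked: that $v_\T$ is well defined independently of the reduced word, modulo the span of $\{v_{\T'}\mid \T'\vartriangleleft\T\}$ (the braid-type relations among the $\psi$'s fail only up to such lower terms); that the $v_\T$ span $S(\mu)$, which is the straightening step, where one rewrites an arbitrary monomial $\psi_w v_\mu$ using the Garnir relations until only standard tableau vectors remain; and that the $v_\T$ are linearly independent, which is then automatic because their number equals $|\St(\mu)|=\dim_\bbF S(\mu)$ by the classical theory of Specht modules. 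This produces the homogeneous standard basis.

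It remains to read off the degree and weight of each $v_\T$. The weight is immediate from the commutation relation $\psi_r e(\bj)=e(s_r\bj)\psi_r$: since $v_\mu\in e(\bi^{\T^\mu})S(\mu)$ and $w_\T$ carries the place-permuted residue sequence $\bi^{\T^\mu}$ to $\bi^\T$, we get $v_\T\in e(\bi^\T)S(\mu)$. For the degree I would induct on $d$ using the graded branching rule for Specht modules (also established in \cite{BKW}): the restriction of $S(\mu)$ to $R^{\Lambda_0}_{d-1}$ carries a graded filtration whose sections, as $A$ runs over the removable nodes of $\mu$, are the modules $S(\mu_A)$ with grading shifted up by $d_A(\mu)$, and under this filtration the basis vector $v_\T$ corresponds, up to that shift, to the basis vector $v_{\T_{\leq d-1}}$ of $S(\mu_A)$, where $A$ is the node of $\T$ labelled $d$. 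Therefore $\deg v_\T=d_A(\mu)+\deg v_{\T_{\leq d-1}}=d_A(\mu)+\deg(\T_{\leq d-1})$ by induction, which is exactly the recursive definition of $\deg(\T)$; the base case $d=0$ is trivial. Summing $q^{\deg v_\T}\,\bi^\T$ over the basis $\{v_\T\mid\T\in\St(\mu)\}$ yields $\chq S(\mu)=\sum_{\T\in\St(\mu)}q^{\deg(\T)}\bi^\T$.

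The main obstacle is the straightening step: writing the homogeneous Garnir relations down explicitly and proving that iterating them reduces every monomial $\psi_w v_\mu$ to a combination of standard tableau vectors. This is the combinatorial heart of the matter --- a graded refinement of the classical standard basis theorem for Specht modules --- and it is where essentially all the real work sits. Once the homogeneous standard basis is available, the weight computation is a one-line idempotent chase and the degree is a routine induction, the recursion defining $\deg(\T)$ through the node degrees $d_A(\mu)$ having been designed in \cite{BKW} precisely to mirror the grading shifts in the branching rule.
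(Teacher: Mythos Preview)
The paper does not prove this theorem; it is simply quoted from \cite{BKW}. Your sketch is a faithful outline of the argument carried out there: pass to the cyclotomic KLR algebra via the isomorphism of \cite{BK3}, take the homogeneous cyclic presentation of $S(\mu)$, define $v_\T:=\psi_{w_\T}v_\mu$ for $\T\in\St(\mu)$, prove these form a basis via graded Garnir straightening, and read off the weight from $\psi_r e(\bj)=e(s_r\bj)\psi_r$.

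One caveat about logical order. In \cite{BKW} the degree of $v_\T$ is computed \emph{directly} from the degrees of the KLR generators along a chosen reduced word for $w_\T$: one proves the combinatorial identity $\deg(\T)=\deg(\T^\mu)+\deg\bigl(\psi_{w_\T}e(\bi^{\T^\mu})\bigr)$ by induction on the length of $w_\T$. The graded branching rule is then \emph{deduced} from the homogeneous basis, not the other way around. Your proposal inverts this, using the branching rule to compute $\deg v_\T$; that is circular as written unless you first establish the branching filtration and its grading shifts independently of the basis (which is possible from the presentation, but is not what \cite{BKW} does). Also, a small precision: $v_\T$ is not in general independent of the reduced word---one fixes a choice, and what is well defined is the associated filtration whose subquotients are one-dimensional.
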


In particular, $\chq S(\mu)$ depends only on  $\mu$ and $e$. 
We extend the bar-involution from $\Z[q,q^{-1}]$ to $\mathscr{C}$ so that 
$\bar{\bi}=\bi$ for all $\bi\in I^d$.  
The fact that $D(\mu)^\circledast\cong D(\mu)$ can now be interpreted as follows:  

\begin{Theorem}\label{Irred}{\rm\cite[Theorem 4.18(3)]{BK2}} Let $\lambda \in \cRP_d$. Then $\chq D(\la)$  is bar-invariant.
\end{Theorem}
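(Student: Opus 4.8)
The plan is to derive Theorem~\ref{Irred} from the self-duality statement $D(\mu)^\circledast\cong D(\mu)$ recorded just above it, by identifying the effect of the graded duality $\circledast$ on graded characters with the bar-involution on $\mathscr{C}$. Recall from \cite[\S2.7]{BK2} that for a finite dimensional graded $H_d$-module $V$ the module $V^\circledast$ is the graded $\bbF$-linear dual, so that $(V^\circledast)_m\cong (V_{-m})^*$ strictly (no degree shift), with $H_d$ acting through a fixed homogeneous anti-automorphism $\tau$ of $H_d$ coming from the presentation of \cite{BK}. The first step is therefore to pin down $\tau$ explicitly and to check that it fixes the Jucys–Murphy elements $L_1,\dots,L_d$, or at least preserves their simultaneous generalized eigenspace decomposition; this is exactly the input that makes $\circledast$ compatible with weight spaces.

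Granting that, the key computational step is the identity
$$\chq(V^\circledast)=\overline{\chq V}\qquad\text{for every finite dimensional graded }H_d\text{-module }V.$$
Indeed, since $\tau$ respects the weight-space structure, the $\bi$-weight space of $V^\circledast$ is canonically $(V_\bi)^*$ as a graded vector space; and since $\circledast$ reverses the grading, the $q^m$-graded piece of $(V^\circledast)_\bi$ has dimension $\dim (V_\bi)_{-m}$. Hence $\qdim (V^\circledast)_\bi=\overline{\qdim V_\bi}$, and summing over $\bi\in I^d$ gives the displayed identity, using that $\bar\bi=\bi$ for all $\bi$.

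Finally, apply this with $V=D(\lambda)$. The hypothesis $\lambda\in\cRP_d$ guarantees, by the discussion in \S\ref{RepTheory}, that $D(\lambda)$ is a nonzero irreducible graded $H_d$-module carrying the distinguished grading with $D(\lambda)^\circledast\cong D(\lambda)$. The identity above then yields $\overline{\chq D(\lambda)}=\chq\bigl(D(\lambda)^\circledast\bigr)=\chq D(\lambda)$, which is precisely the asserted bar-invariance.

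The main obstacle is the bookkeeping behind the first step: one must verify that the particular anti-automorphism built into the definition of $\circledast$ genuinely respects the weight-space decomposition (a priori it need not, since an arbitrary anti-automorphism of $H_d$ could permute the subspaces $V_\bi$), and that the grading conventions make $\circledast$ strictly degree-reversing rather than degree-reversing up to a shift. Both points are settled by unwinding the constructions of \cite{BK} and \cite[\S2.7]{BK2}; once they are in hand, the remainder is the short formal argument above. I note that a purely formal alternative — observing that $\circledast$ is a contravariant self-equivalence of $\mod{H_d}$ fixing every irreducible module — only recovers invariance of the \emph{ungraded} character, so the degree-reversal input genuinely cannot be bypassed.
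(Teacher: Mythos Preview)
Your proposal is correct and follows exactly the approach the paper itself indicates: the sentence immediately preceding the theorem states that ``The fact that $D(\mu)^\circledast\cong D(\mu)$ can now be interpreted as follows,'' so the paper is presenting bar-invariance of $\chq D(\lambda)$ as a restatement of self-duality under $\circledast$, and then simply cites \cite[Theorem 4.18(3)]{BK2} rather than spelling out the details. Your argument fills in precisely those details --- that $\circledast$ reverses grading and preserves weight spaces (because the anti-automorphism fixes the idempotents $e(\bi)$), whence $\chq(V^\circledast)=\overline{\chq V}$ --- and this is the standard derivation in the cited reference.
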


When viewed as graded $H_d$-modules, the set $\{D(\mu) \mid \mu \in \cRP_d\}$ forms a complete irredundant set of the irreducible graded $H_d$-modules \emph{up to a grading shift}.  Given $m\in\Z$ and a graded $H_d$-module $V$, we let $V\langle m\rangle$ denote the module obtained by shifting the grading of  $V$ up by $m$.  Thus we have $\chq V\langle m\rangle=q^m\,\chq V$.

For $\mu \in \cP_d$ and $\lambda \in \cRP_d$, we define the corresponding \emph{graded decomposition number} as follows 
$$d_{\mu,\lambda}=d_{\mu,\la}(q):=\sum_{m\in\Z} a_mq^m \in \Z_{\geq 0}[q,q^{-1}],$$  where $a_m$ is the multiplicity of $D(\la)\langle m\rangle$ in a graded composition series of $S(\mu)$. 
%We refer to $\{d_{\mu,\lambda} \mid \mu \in \cP_d, \lambda \in \cRP_d\}$ as the \emph{graded decomposition numbers} for $H_d$.
Note that $d_{\mu,\la}(1)$ is the usual decomposition number, so the following result easily follows from the well-known facts in the ungraded setting and the fact that the natural map $S(\mu)\twoheadrightarrow D(\mu)$ is of degree zero.

\begin{Theorem}\label{OldFund} 
Let $\lambda \in \cRP_d$ and $\mu \in \cP_d$.  Then

{\rm (i)} $d_{\mu,\lambda}=0$ unless $\lambda\in M(\mu)$. % \unlhd \mu$ and $\lambda \sim \mu$.

{\rm (ii)} $d_{\lambda,\lambda}=1$.
\end{Theorem}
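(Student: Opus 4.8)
The statement to prove is Theorem~\ref{OldFund}: for $\lambda \in \cRP_d$ and $\mu \in \cP_d$, (i) $d_{\mu,\lambda}=0$ unless $\lambda\in M(\mu)$, and (ii) $d_{\lambda,\lambda}=1$. As the remark preceding the theorem indicates, both parts are meant to follow from the well-known ungraded facts together with the fact that $S(\mu)\twoheadrightarrow D(\mu)$ is a degree-zero map. So the plan is to reduce everything to $q=1$ and invoke the classical decomposition theory for Hecke algebras.

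\textbf{Part (i).} First I would specialize: $d_{\mu,\lambda}(1)$ is the ordinary decomposition number $[S(\mu):D(\lambda)]$. By the classical theory (the block structure of $H_d$, via the Jucys--Murphy eigenvalues / the $\sim$ equivalence on partitions), if $D(\lambda)$ appears as a composition factor of $S(\mu)$ then $S(\mu)$ and $D(\lambda)$ lie in the same block, which forces $\lambda \sim \mu$; and by the standard triangularity of decomposition matrices with respect to dominance order, $[S(\mu):D(\lambda)]\neq 0$ implies $\lambda \unlhd \mu$. Together these two conditions are exactly the definition of $\lambda \in M(\mu)$. Hence $d_{\mu,\lambda}(1)=0$ whenever $\lambda\notin M(\mu)$, and since $d_{\mu,\lambda}(q)\in\Z_{\geq 0}[q,q^{-1}]$ has nonnegative coefficients, $d_{\mu,\lambda}(1)=0$ forces $d_{\mu,\lambda}(q)=0$ identically.

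\textbf{Part (ii).} For the diagonal entry, the ungraded fact is $[S(\lambda):D(\lambda)]=1$ for $\lambda\in\cRP_d$, coming from the surjection $S(\lambda)\twoheadrightarrow D(\lambda)$ and the fact that $D(\lambda)$ does not occur in the kernel (again by dominance triangularity, every other factor $D(\nu)$ of $S(\lambda)$ has $\nu\lhd\lambda$). So $d_{\lambda,\lambda}(1)=1$, meaning $d_{\lambda,\lambda}(q)$ is a single monomial $q^m$ with coefficient $1$. To pin down $m=0$ I would use that the projection $S(\lambda)\twoheadrightarrow D(\lambda)$ is a degree-zero homomorphism of graded modules: this exhibits a copy of $D(\lambda)\langle 0\rangle$ (not a shift) as a graded quotient, i.e.\ as the graded head, so the factor $D(\lambda)$ in the graded composition series occurs with grading shift $0$. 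Therefore $d_{\lambda,\lambda}(q)=q^0=1$.

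\textbf{Main obstacle.} The genuinely new input beyond the classical ungraded statements is the grading-shift bookkeeping in part (ii): one must make sure that ``the degree-zero surjection'' really identifies $D(\lambda)\langle 0\rangle$ with the top graded layer, and that no competing shifted copy $D(\lambda)\langle m\rangle$ with $m\neq 0$ could also appear lower down. The latter is handled precisely by $d_{\lambda,\lambda}(1)=1$ (total multiplicity one across all shifts), so the only real point is the identification of the head, which is immediate from the definition of graded duality and the degree-zero property of $S(\lambda)\twoheadrightarrow D(\lambda)$ recalled in \S\ref{RepTheory}. Everything else is a direct translation of standard results, so I do not expect any serious difficulty.
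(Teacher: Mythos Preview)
Your proposal is correct and follows exactly the approach the paper indicates: the paper does not give a formal proof but simply notes that the result ``easily follows from the well-known facts in the ungraded setting and the fact that the natural map $S(\mu)\twoheadrightarrow D(\mu)$ is of degree zero,'' and your argument is precisely a faithful fleshing-out of that sentence. In particular, your use of nonnegativity of $d_{\mu,\lambda}(q)$ to pass from $d_{\mu,\lambda}(1)=0$ to $d_{\mu,\lambda}(q)=0$, and your use of the degree-zero surjection together with $d_{\lambda,\lambda}(1)=1$ to pin down $d_{\lambda,\lambda}(q)=1$, are exactly what is intended.
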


We have 
\begin{equation*}\label{EE}
\chq S(\mu) = \sum_{\lambda \in \cRP_d} d_{\mu,\lambda} \chq D(\lambda).
\end{equation*}
By Theorem~\ref{OldFund}, the graded decomposition matrix $(d_{\mu,\la})$ is unitriangular, so the knowledge of the graded decomposition numbers implies the knowledge of the graded characters of the irreducible $H_d$-modules. The converse is also true since the graded characters of the irreducible  $H_d$-modules are linearly independent, see e.g. \cite[Theorem 3.17]{KL1}).

The following key fact is special for the case $\cha \bbF=0$: 

\begin{Theorem}\label{NewFund} \cite[Theorem 3.9 and Corollary 5.15]{BK2}
Let $\cha \bbF=0$, $\lambda \in \cRP_d$ and $\mu \in \cP_d$. If $\mu\neq \la$, then $d_{\mu,\lambda} \in q\Z_{\geq 0}[q]$.
\end{Theorem}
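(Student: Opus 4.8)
The plan is to set aside the trivial positivity, observe that the structural facts quoted in the excerpt all hold in arbitrary characteristic and so cannot by themselves force the conclusion, and then bring in the one characteristic-zero ingredient that does: the realisation of the graded Grothendieck groups of the $H_d$ as the basic representation of $U_q(\widehat{\mathfrak{sl}}_e)$, via the graded analogue of Ariki's theorem.

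First I would dispose of the easy half: by definition $d_{\mu,\la}=\sum_m a_mq^m$ with each $a_m$ a graded composition multiplicity, so $d_{\mu,\la}\in\Z_{\ge 0}[q,q^{-1}]$ automatically, and the whole content of the theorem is that for $\mu\ne\la$ the Laurent polynomial $d_{\mu,\la}$ has no term of degree $\le 0$. If $e=0$ then $S(\mu)=D(\mu)$ for every $\mu$ and $d_{\mu,\la}=\delta_{\mu,\la}$, so from here on I assume $e>0$. I would also record the obstruction to a soft argument: Theorems~\ref{Specht}, \ref{Irred} and \ref{OldFund} hold over any field, and $\chq S(\mu)$ depends only on $\mu$ and $e$, so no formal combination of them can prove the theorem --- the hypothesis $\cha\bbF=0$ must enter through extra structure.

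That structure is the categorification of the basic module. Using the $\Z$-grading on $H_d$ from \cite{BK}, the graded $i$-induction and $i$-restriction functors make the direct sum $\mathscr K:=\bigoplus_{d\ge 0}[\mod{H_d}]$ of the graded Grothendieck groups into an integrable $U_q(\widehat{\mathfrak{sl}}_e)$-module, cyclically generated by the class of the trivial module; by the graded Ariki theorem of \cite{BK} this identifies $\mathscr K$ with the basic module $V(\La_0)$ inside the Fock space $\mathcal F$, carrying $[S(\mu)]$ to the monomial basis vector $m_\mu$. I would transport Lusztig's bar-involution from $V(\La_0)$ to $\mathscr K$ and check, using the degree formula of Theorem~\ref{Specht}, that it agrees with the involution induced by the graded duality $\circledast$ of \cite[\S2.7]{BK2}; in particular $[D(\la)]$ is bar-invariant, as in Theorem~\ref{Irred}. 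Now the monomials $\{m_\mu\}$ are bar-unitriangular for the dominance order, and by Theorem~\ref{OldFund} the classes $[D(\la)]$ ($\la\in\cRP_d$) are unitriangular with respect to $\{[S(\mu)]\}$ for that order, so Lusztig's uniqueness lemma for canonical bases forces the $[D(\la)]$ to be the canonical --- equivalently, dual canonical --- basis vectors of $V(\La_0)$. Hence $d_{\mu,\la}(q)$ is, directly or after the standard passage between a unitriangular change-of-basis matrix and its inverse (an operation that preserves the property), a coefficient of Lusztig's canonical basis of $\mathcal F$ expressed in the monomial basis, and such a coefficient lies in $q\Z[q]$ whenever $\mu\ne\la$ by the defining triangularity of that basis. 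Intersecting with the positivity from the previous paragraph gives $d_{\mu,\la}\in q\Z_{\ge 0}[q]$.

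The characteristic-zero hypothesis is used precisely here: the categorical action and the bar-invariance of $[D(\la)]$ hold over any field, but the identification of the resulting bar-invariant unitriangular basis with \emph{Lusztig's} canonical basis of $V(\La_0)$ --- the graded Ariki theorem --- rests on Ariki's theorem \cite{Ariki}, a characteristic-zero result; in characteristic $p$ the grading produces genuinely different polynomials with no reason to lie in positive degrees. Accordingly, the main obstacle is the graded Ariki theorem itself: constructing the graded categorical $\widehat{\mathfrak{sl}}_e$-action, verifying the quantum Serre relations at the level of graded functors, identifying $\mathscr K$ with $V(\La_0)$ so that Specht classes become monomials, and matching $\circledast$ with Lusztig's bar-involution. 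Granting that, the remaining ingredients --- Lusztig's uniqueness lemma, and the elementary fact that a unitriangular matrix over $\Z[q,q^{-1}]$ with off-diagonal entries in $q\Z[q]$ has inverse with the same property --- are routine.
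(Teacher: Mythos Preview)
The paper does not prove this theorem at all; it is quoted from \cite{BK2} without argument, so there is no ``paper's own proof'' to compare against beyond the bare citation. Your sketch correctly names the architecture of the proof in \cite{BK2}: realise $\bigoplus_d[\mod{H_d}]$ as $V(\La_0)$ inside the Fock space with $[S(\mu)]\mapsto m_\mu$, match the duality $\circledast$ with the bar-involution, and identify the $[D(\la)]$ with the (dual) canonical basis, whose off-diagonal coefficients lie in $q\Z[q]$ by construction.

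There is, however, a genuine gap in the logic as you have written it. You claim that bar-invariance of $[D(\la)]$ together with unitriangularity with respect to $\{[S(\mu)]\}$ allows ``Lusztig's uniqueness lemma for canonical bases'' to force $[D(\la)]$ to coincide with the canonical basis. This is circular. Lusztig's lemma asserts uniqueness of a bar-invariant basis \emph{whose off-diagonal coefficients lie in $q\Z[q]$}; bar-invariance and unitriangularity alone do not determine a basis, and there are in general many bar-invariant unitriangular bases. To invoke the lemma you would already need $d_{\mu,\la}\in q\Z[q]$ for $\mu\neq\la$, which is exactly the conclusion. Your own observation makes this vivid: bar-invariance of $[D(\la)]$ and unitriangularity hold over any field, yet the theorem is false in positive characteristic, so those two properties cannot be doing the work.

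What actually happens in \cite{BK2} is that Ariki's theorem (via the geometry of quiver varieties, as in Varagnolo--Vasserot) provides an \emph{independent} identification of the simple classes with the dual canonical basis, equivalently of the $[P(\la)]$ with the canonical basis $G(\la)$; the containment $d_{\mu,\la}\in q\Z[q]$ then falls out of the defining property of $G(\la)$. Lusztig's lemma plays no role in the deduction --- it is how one constructs $G(\la)$, not how one recognises the $[D(\la)]$ as that basis. So your final paragraph misplaces the characteristic-zero input: Ariki's theorem is not a hypothesis feeding into Lusztig's lemma but the substantive step that replaces it.
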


\section{The ladder weight}\label{JLambdaSect}

\subsection{A dominance lemma}
For $\la\in \cRP_d$, let $t=t_\lambda$ be the index of its bottom ladder, and $r_t(\lambda)=|\la\cap L_t|$. Denote $\lambda \cap L_t = \{A_1,\dots,A_{r_t(\la)}\}$. Order the nodes of this set so that  $A_u$ is below $A_s$ whenever $u<s$, and refer to the sequence $(A_1,\dots,A_{r_t(\la)})$ as the \emph{bottom removable sequence of $\lambda$}. Observe that the nodes of the bottom removable sequence of $\la$ are indeed removable nodes of  $\lambda$. All nodes of this sequence have the same residue which we refer to as the {\em residue of the bottom removable sequence}. 

The following technical result generalizes  Lemma 1.1 in \cite{KS}.

\begin{Lemma}\label{Tech} Let $\lambda \in \cRP_d$ and $\mu \in \cP_d$, with $\lambda {\not} {\unlhd} \mu$.  Let $\bA=(A_1,\dots,A_r)$ be the bottom removable sequence for $\lambda$, and $i$ be its residue. If $\bB=\{B_1,\dots,B_r\}$ is any set of $r$ removable $i$-nodes for $\mu$ then $\lambda_{\bA} {\not} {\unlhd} \mu_{\bB}$
\end{Lemma}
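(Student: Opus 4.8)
The plan is to translate everything into partial sums: for partitions $\nu,\rho$ of the same integer, $\nu\unlhd\rho$ iff $\sigma_k(\nu)\le\sigma_k(\rho)$ for all $k\ge1$, and for a set $\bC$ of removable nodes of $\nu$ one has $\sigma_k(\nu_{\bC})=\sigma_k(\nu)-\#\{C\in\bC:\ C\text{ lies in a row}\le k\}$. So $\la_{\bA}\not\unlhd\mu_{\bB}$ means precisely that some index $k$ satisfies $\sigma_k(\la)-\#\{j:A_j\text{ in a row}\le k\}>\sigma_k(\mu)-\#\{j:B_j\text{ in a row}\le k\}$, and I would try to manufacture such a $k$ starting from one with $\sigma_k(\la)>\sigma_k(\mu)$ (which exists since $\la\not\unlhd\mu$).

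First I would pin down the geometry of $\bA$. As $\la\in\cRP_d$, Lemma~\ref{BottomComp} gives that $L_t$ is bottom complete for $\la$, so $A_1,\dots,A_r$ occupy the $r$ lowest, hence consecutive, rows of $L_t$; writing $h$ for the number of rows of $L_t$ and $t=q(e-1)+s$ with $0\le s\le e-2$, these are rows $h,h-1,\dots,h-r+1$, with $h=q$ if $s=0$ and $h=q+1$ otherwise, and $\la_{h-j+1}=t-(h-j)(e-1)$ for $1\le j\le r$. Since $t=t_\la$, the partition $\la$ has no node in any ladder $L_m$ with $m>t$, i.e. $\la_a\le t-(a-1)(e-1)$ for all $a$; in particular $\la_a=0$ for $a>h$, so $\la$ has exactly $h$ nonzero rows and $A_1$ lies in its last row. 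I would also record the elementary arithmetical fact, visible from the two formulas for $h$, that $h+t\not\equiv1\pmod e$.

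The proof would then run by induction on $r$. Order $\bB=\{B_1,\dots,B_r\}$ so that $B_r$ lies in the smallest row; recall $A_r$ likewise lies in the smallest row among $A_1,\dots,A_r$. It suffices (for $r\ge2$) to prove the single-step statement $\la\setminus\{A_r\}\not\unlhd\mu\setminus\{B_r\}$: indeed $\la\setminus\{A_r\}$ is again $e$-restricted, with bottom ladder $L_t$ and bottom removable sequence $(A_1,\dots,A_{r-1})$ of residue $i$, while $B_1,\dots,B_{r-1}$ remain removable $i$-nodes of $\mu\setminus\{B_r\}$; so the inductive hypothesis applied to these data yields exactly $\la_{\bA}\not\unlhd\mu_{\bB}$. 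The base case $r=1$ is the Lemma itself.

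For the single-step statement, set $a=\mathrm{row}(A_r)=h-r+1$ and $b=\mathrm{row}(B_r)$, and pick $k$ with $\sigma_k(\la)>\sigma_k(\mu)$. If $k\ge b$ or $k<\min(a,b)$, the required strict inequality for $\la\setminus\{A_r\}$ against $\mu\setminus\{B_r\}$ follows at once from the reformulation; the only case left open is that every such $k$ lies in $[a,b)$ with $\sigma_k(\la)=\sigma_k(\mu)+1$. I would eliminate this using $\sigma_k(\la)=d$ for $k\ge h$: if $\mu$ has more than $h$ rows then $h$ is one of our indices $k$, whence $\sigma_h(\mu)=d-1$, forcing $\mu$ to have exactly $h+1$ rows with last part $1$ and $b=h+1$ — which contradicts $\mathrm{row}(B_{r-1})>b$ when $r\ge2$, and makes $B_1=(h+1,1)$ an $i$-node, i.e. $h+t\equiv1\pmod e$, against the fact above, when $r=1$; while if $\mu$ has at most $h$ rows then $b\le h$ but $b>a=h-r+1$, so the $r$ distinct rows of $B_1,\dots,B_r$ would all lie in the $(r-1)$-element set $\{h-r+2,\dots,h\}$, which is absurd. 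The hard part is precisely this last step: squeezing $\mu$ near row $h$ tightly enough to trigger either the pigeonhole contradiction ($r\ge2$) or the residue contradiction ($r=1$), and both of these rest entirely on the two structural inputs that $\la$ has exactly $h$ nonzero rows and that $h+t\not\equiv1\pmod e$.
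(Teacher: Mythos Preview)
Your proof is correct. Both your argument and the paper's peel off one node at a time from each of $\lambda$ and $\mu$, and both rest on the same two structural facts about $\lambda$: that it has exactly $h$ nonzero rows (where $h$ is the row of the bottom node of $L_t$) and that $\Res(h+1,1)\neq i$ (your $h+t\not\equiv 1\pmod e$). The organization, however, is different. The paper argues the contrapositive: assuming $\lambda_{\bA}\unlhd\mu_{\bB}$, row $h+1$ is empty in $\mu_{\bB}$, and the residue constraint then forces the lowest $B$-node to lie in a row $\le h$. Ordering the $B_m$ from lowest to highest immediately yields $\mathrm{row}(B_m)\le\mathrm{row}(A_m)$ for every $m$, after which adding the nodes back one at a time is a one-line partial-sum check. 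Your direct approach, lacking the global bound on the rows of $\mu$ that the dominance hypothesis supplies, has to split into cases on whether $\mu$ has more than $h$ rows and invoke a separate pigeonhole or residue contradiction in each; this works, but the contrapositive buys a noticeably cleaner induction step.
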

\begin{proof} Let $\lambda_{\bA} \unlhd \mu_{\bB}$. We need to show that $\lambda \unlhd \mu$. 
Let $A_m$ be in row $j_m$ of $\lambda$, and let $B_m$ be in row $l_m$ of $\mu$ for $m=1,\dots,r$.  By our convention 
$j_m = j_1-(m-1)$ for $1 \leq m \leq r$.  We may also assume that $l_1>\dots>l_r$.  

Let $\lambda^m = \lambda \setminus \{A_{m+1},\dots,A_r\}$ and $\mu^m = \mu \setminus\{B_{m+1},\dots,B_r\}$ for $0 \leq m \leq r$.  Then it suffices to show by induction on $m=0,1,\dots,r$ that $\lambda^m\unlhd \mu^m$, with the induction base case, $m=0$, being our assumption.

Let $m>0$ and assume by induction that $\lambda^{m-1} \unlhd \mu^{m-1}$.  Note that 
$$\sigma_k(\lambda^m)=\begin{cases}
\sigma_k(\lambda^{m-1}) & \text{if $k<j_m$},\\
\sigma_k(\lambda^{m-1}) + 1 & \text{if $k \geq j_m$},\\
\end{cases}$$
%and
$$
\sigma_k(\mu^m)=\begin{cases}
\sigma_k(\mu^{m-1}) & \text{if $k<l_m$},\\
\sigma_k(\mu^{m-1}) + 1 & \text{if $k \geq l_m$}.\\
\end{cases}$$
Since $\lambda^{m-1} \unlhd \mu^{m-1}$, we deduce that
\begin{equation*}\label{E0}
\sigma_k(\mu^m) \geq \sigma_k(\lambda^m) \text{\hspace{1cm}(for~ $k \geq l_m$ or $k < j_m$)}.
\end{equation*}

Observe that since $\bA$ was the bottom removable sequence, it follows that $j_1$ is the bottom non-empty row in $\lambda$.  Since $\lambda$ is $e$-restricted, we know also that $\res (j_1+1,1) \neq i$. Furthermore, we have that row $j_1+1$ is empty in $\lambda_\bA$, which implies that it is empty in $\mu_\bB$ as well since $\lambda_{\bA} \unlhd \mu_{\bB}$.

Since $B_1$ is an addable $i$-node for $\mu_\bB$, it follows that $l_1 < j_1+1$.  Since $j_m = j_1 - (m-1)$ and $l_n<l_{n-1}$ for all $1 \leq n \leq r$, it follows that $l_m \leq j_m$.
Thus for all $k$ we have either $k < j_m$ or $k \geq l_m$, which gives us that $\sigma_k(\mu^m) \geq \sigma_k(\lambda^m)$ for all $k>0$. Hence $\mu^m \unrhd \lambda^m$ completing the proof.
\end{proof}

\subsection{\boldmath Definition and properties of $\bj^\lambda$}
Let $\lambda \in \cRP_d$ have bottom removable sequence $\bA=(A_1,A_2,\dots,A_r)$ and define the {\em ladder weight $\bj^\la=(j_1,\dots,j_d)$} inductively as follows: $j_d=\res A_r$ and $(j_1,\dots,j_{d-1})=\bj^{\la_{A_r}}$. The idea of the ladder weight appears in \cite[\S6.2]{LLT}. 

\begin{Lemma}\label{JLambda} Let $\lambda \in \cRP_d$ and $\mu \in \cP_d$, with $\mu {\not} {\unrhd} \lambda$.  Then $\bj^{\lambda}$ does not appear in $\chq S(\mu)$. In particular, if $\mu$ is $e$-restricted, then $\bj^{\lambda}$ does not appear in $\chq D(\mu)$.
\end{Lemma}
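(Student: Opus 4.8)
The plan is to prove the statement by induction on $d$, using the combinatorial description of $\chq S(\mu)$ from Theorem~\ref{Specht} together with the dominance input from Lemma~\ref{Tech}. Recall $\chq S(\mu) = \sum_{\T\in\St(\mu)} q^{\deg\T}\,\bi^\T$, so $\bj^\la$ appears in $\chq S(\mu)$ if and only if there is a standard $\mu$-tableau $\T$ with $\bi^\T = \bj^\la$. Fix the bottom removable sequence $\bA = (A_1,\dots,A_r)$ of $\la$, with common residue $i$, so that by definition $j_d = j_{d-1} = \cdots = j_{d-r+1} = i$ and $(j_1,\dots,j_{d-r}) = \bj^{\la_\bA}$ (iterating the recursive definition $r$ times, since removing $A_r$ from $\la$ leaves $A_1,\dots,A_{r-1}$ as the bottom removable sequence of $\la_{A_r}$, and so on).

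First I would handle the base/trivial observations: if $\la$ is not a move for $\mu$ in the weaker sense that $\la\not\sim\mu$, then no tableau of shape $\mu$ can have residue sequence equal to any residue sequence of a tableau of shape $\la$ — in particular not $\bj^\la$, which is realized by some standard $\la$-tableau (e.g. one built along the bottom removable sequence) — so we are done. Hence we may assume $\la\sim\mu$. Combined with the hypothesis $\mu\not\unrhd\la$, this is exactly the setup $\la\not\unlhd\mu$ needed to invoke Lemma~\ref{Tech}.

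Now suppose for contradiction that $\T\in\St(\mu)$ has $\bi^\T = \bj^\la$. The last $r$ entries of $\bj^\la$ all equal $i$, so the nodes of $\mu$ labeled $d-r+1,\dots,d$ in $\T$ are all $i$-nodes; since $\T$ is standard these are $r$ distinct removable nodes of $\mu$ — call this set $\bB = \{B_1,\dots,B_r\}$ — and $\T_{\leq d-r}$ is a standard tableau of shape $\mu_\bB$ with residue sequence $(j_1,\dots,j_{d-r}) = \bj^{\la_\bA}$. By Theorem~\ref{Specht} applied to $H_{d-r}$, this means $\bj^{\la_\bA}$ appears in $\chq S(\mu_\bB)$. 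On the other hand, $\la_\bA\in\cRP_{d-r}$ by Lemma~\ref{BottomComp} (removing a full bottom ladder segment from an $e$-restricted partition keeps it $e$-restricted — or more directly, $\la_\bA$ is just a smaller $e$-restricted partition), and Lemma~\ref{Tech} gives $\la_\bA\not\unlhd\mu_\bB$, i.e. $\mu_\bB\not\unrhd\la_\bA$. The inductive hypothesis (for $d-r < d$) then says $\bj^{\la_\bA}$ does \emph{not} appear in $\chq S(\mu_\bB)$ — a contradiction. This proves the first assertion; the "in particular" follows immediately from the unitriangular expansion $\chq S(\mu) = \sum_{\nu\in\cRP}d_{\mu,\nu}\,\chq D(\nu)$ with $d_{\mu,\mu}=1$ (Theorem~\ref{OldFund}), which shows $\chq D(\mu)$ is a $\Z_{\geq0}[q,q^{-1}]$-combination of $\chq S(\nu)$ for various $\nu$ — wait, rather one argues the other direction: since $d_{\nu,\mu}\neq 0$ forces $\mu\unlhd\nu$ (Theorem~\ref{OldFund}(i)), any $\nu$ with $\chq D(\mu)$ occurring in $\chq S(\nu)$ satisfies $\mu\unlhd\nu$; inverting, $\chq D(\mu)$ is a $\Z[q,q^{-1}]$-combination of $\chq S(\nu)$ with $\nu\unrhd\mu$, and for each such $\nu$ we have $\nu\not\unrhd\la$ would need checking — instead, cleanest is: if $\mu$ is $e$-restricted and $\mu\not\unrhd\la$, then $d_{\mu,\la}$-type reasoning is not needed; just note $\chq D(\mu)$ is obtained from $\chq S(\mu)$ by subtracting $\sum_{\nu\neq\mu}d_{\mu,\nu}\chq D(\nu)$ where every such $\nu$ satisfies $\mu\rhd\nu$ hence $\nu\not\unrhd\la$ as well, so by downward induction on $\mu$ in dominance order $\bj^\la$ appears in none of these, and hence not in $\chq D(\mu)$.

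The main obstacle, and the step deserving the most care, is verifying that the last $r$ residues of $\bj^\la$ are genuinely all equal to $i$ and that peeling them off corresponds exactly to $\la\mapsto\la_\bA$ at the level of the recursive definition of $\bj^\la$ — this is where the specific structure of the bottom removable sequence (all nodes in the single bottom ladder $L_{t_\la}$, successive ones in rows $j_1, j_1-1, \dots$) is used, via the fact that after deleting $A_r$ the bottom ladder of $\la_{A_r}$ is still $L_{t_\la}$ (now containing one fewer node) with bottom removable sequence $(A_1,\dots,A_{r-1})$. Once that unwinding is pinned down, the rest is a clean induction feeding Lemma~\ref{Tech} into Theorem~\ref{Specht}; I do not expect the passage from $S(\mu)$ to $D(\mu)$ to be difficult given unitriangularity.
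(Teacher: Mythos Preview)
Your argument is correct and follows essentially the same route as the paper: induction on $d$, peel off the bottom removable sequence $\bA$ of $\lambda$, match the last $r$ entries of $\bj^\lambda$ with $r$ removable $i$-nodes $\bB$ of $\mu$, and then combine Lemma~\ref{Tech} with the inductive hypothesis to reach a contradiction. The only cosmetic difference is the order in which you invoke Lemma~\ref{Tech} and the inductive hypothesis (you apply Lemma~\ref{Tech} first to get $\lambda_\bA\not\unlhd\mu_\bB$ and then contradict induction; the paper applies induction first in contrapositive form to get $\mu_\bB\unrhd\lambda_\bA$ and then contradicts Lemma~\ref{Tech}); these are logically equivalent.

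Two small remarks. First, the claim that $B_1,\dots,B_r$ are all removable \emph{in $\mu$ itself} (not merely after successive removals) deserves a word: it holds because nodes of the same residue are never adjacent, so removing earlier $B_j$'s cannot create removability for later ones. The paper also leaves this implicit. Second, your treatment of the ``in particular'' is more elaborate than necessary: since $D(\mu)$ is a quotient of $S(\mu)$, the weight space $D(\mu)_{\bj^\lambda}$ is a quotient of $S(\mu)_{\bj^\lambda}=0$, and you are done immediately. Your downward-induction argument is correct (the step $\mu\rhd\nu$ and $\mu\not\unrhd\lambda$ imply $\nu\not\unrhd\lambda$ follows from transitivity of $\unrhd$), just longer than needed.
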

\begin{proof} We apply induction on $d$, the base case $d=0$ being clear. Let $d>0$ and suppose for a contradiction that $\bi^{\T} = \bj^{\lambda}$ for some $\T \in \St(\mu)$.  Let $\bA = (A_1,\dots,A_r)$ be the bottom removable sequence of $\lambda$ and let $i$ be its residue.
Let $\bB=\{B_1,\dots,B_r\}$ be the nodes of $\mu$ labeled in $\T$ with $d,d-1,\dots,d-r+1$.  Since $\bi^{\T} = \bj^{\lambda}$ we have that $\Res B_1=\dots=\Res B_r = i$.
Let $\T'\in\St(\mu_{\bB})$ be the tableau obtained from $\T$ by removing  $B_1,\dots,B_r$.  Then $\bi^{\T'} = \bj^{\lambda_{\bA}}$, whence $\bj^{\lambda_{\bA}}$ appears in $\chq S(\mu_\bB)$.  By the inductive assumption,  $\mu_{\bB} \unrhd \lambda_{\bA}$. Now, by Lemma~\ref{Tech}, $\mu \unrhd \la$, a contradiction.
\end{proof}

\begin{Lemma}\label{K,K-Gen} Let $\lambda \in \cRP_d$ and set $t=t_\la$, $r_m=r_m(\la)$, $R_m:=r_1 + \dots + r_m$, and $\lambda(m):= \lambda \cap (L_1 \cup \dots \cup L_m)$ for $m>0$.  If $\T \in \St(\la)$ has $\bi^\T = \bj^\la$ then for each $m>0$ we have $\sh(T_{\leq R_m}) = \la(m)$.
\end{Lemma}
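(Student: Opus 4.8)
I would argue by induction on $d$, the case $d=0$ being trivial, at each step peeling off the bottom ladder $L_t$ of $\la$, where $t=t_\la$. The starting point is that $\bj^\la$ respects the ladder filtration: by its recursive definition $\bj^\la$ is obtained by repeatedly discarding the top node of the current bottom ladder, so the first $r_t$ of these removals exhaust $\la\cap L_t$ and leave the partition $\la(t-1)=\la\setminus(\la\cap L_t)$, which equals $\la\cap(L_1\cup\cdots\cup L_{t-1})$ since $r_m(\la)=0$ for $m>t$. Thus the last $r_t$ entries of $\bj^\la$ are all equal to the residue $\rho:=\Res L_t$, and the first $d-r_t$ entries form $\bj^{\la(t-1)}$. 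One checks that $\la(t-1)$ is again a partition and is $e$-restricted (its ladders are those of $\la$ lying below $L_t$, each bottom complete for $\la$ by Lemma~\ref{BottomComp}), so $\la(t-1)\in\cRP_{d-r_t}$ and $r_m(\la(t-1))=r_m$ for $m<t$.

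The heart of the argument is then the claim that, \textbf{if $\bi^\T=\bj^\la$, the set $N$ of nodes of $\la$ carrying the labels $d-r_t+1,\dots,d$ in $\T$ equals $\la\cap L_t$}. Granting this, $\mu:=\sh(\T_{\le d-r_t})=\la\setminus N=\la(t-1)$, so $\T_{\le d-r_t}\in\St(\la(t-1))$ has residue sequence $\bj^{\la(t-1)}$; applying the inductive hypothesis to $\T_{\le d-r_t}$ and $\la(t-1)\in\cRP_{d-r_t}$ yields $\sh(\T_{\le R_m})=\la(m)$ for $m<t$ (here the relevant partial sums and the truncated partitions $\la(t-1)(m)=\la(m)$ all match up), and for $m\ge t$ both sides equal $\la$ because $R_m=d$. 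So the whole weight of the lemma rests on the claim.

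To prove the claim I would first record the combinatorics of $\la\cap L_t$ for $e$-restricted $\la$: since $L_t$ meets each row at most once and is bottom complete for $\la$, the set $\la\cap L_t$ occupies a block of consecutive rows $p,\dots,q$; because $t$ is the largest ladder index meeting $\la$, the $L_t$-node in each such row $a$ must be the last node $(a,\la_a)$ of that row (so $\la_a=t-(a-1)(e-1)$, of residue $\la_a-a\equiv t-1\equiv\rho$); and a short argument using $e$-restrictedness shows $q$ is the last row of $\la$, whence each of these nodes is a removable node of $\la$ (this is the bottom removable sequence). Turning to $N$: all its nodes have residue $\rho$ (as $\bi^\T=\bj^\la$), and because $\sh(\T_{\le d-r_t})$ is a partition, $N$ meets each row in a final segment, which has length $\le 1$ since consecutive nodes of a row have distinct residues ($e\ge2$). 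To locate the rows $N$ occupies I would invoke Lemma~\ref{JLambda}: $\T_{\le d-r_t}\in\St(\mu)$ has residue sequence $\bj^{\la(t-1)}$, so by Theorem~\ref{Specht} the tuple $\bj^{\la(t-1)}$ occurs in $\chq S(\mu)$, and since $\la(t-1)$ is $e$-restricted, Lemma~\ref{JLambda} forces $\mu\unrhd\la(t-1)$. Comparing the partial sums $\sigma_k$ of $\mu=\la\setminus N$ and of $\la(t-1)=\la\setminus(\la\cap L_t)$ at index $k=p-1$, together with the fact that $\la\cap L_t$ lies in rows $\ge p$, shows $N$ has no node above row $p$; since $N\subseteq\la$ and $q$ is the last row, $N$ is contained in the $r_t$ rows $p,\dots,q$, and being of size $r_t$ with at most one node per row it must meet each of those rows in its last node, i.e.\ in the node of $\la\cap L_t$ there. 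Hence $N=\la\cap L_t$.

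The step I expect to be the main obstacle is exactly this identification $N=\la\cap L_t$. It is tempting to hope that ``$N$ consists of removable $\rho$-nodes'' already forces $N=\la\cap L_t$, but $\la$ can carry removable $\rho$-nodes in the higher ladders $L_{t-e},L_{t-2e},\dots$, and ruling them out seems to genuinely require Lemma~\ref{JLambda}, fed the truncated tableau $\T_{\le d-r_t}$ together with the observation that its residue sequence is the ladder weight of the \emph{still $e$-restricted} partition $\la(t-1)$. The remaining pieces — the identification $\bj^\la=(\bj^{\la(t-1)},\rho,\dots,\rho)$, the ``last node of its row'' computation, and the partial-sum comparison — are routine ladder bookkeeping, but the first of these needs a little care because ladders of $\la$ below $L_t$ may be empty.
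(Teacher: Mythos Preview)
Your argument is correct, but the paper proceeds quite differently, and more economically. Rather than inducting on $d$ and peeling off the \emph{top} ladder $L_t$, the paper inducts on the ladder index $m$, building the tableau \emph{up} from $L_1$. Assuming $\sh(\T_{\le R_{m-1}})=\la(m-1)$, the nodes $\bB$ carrying labels $R_{m-1}+1,\dots,R_m$ all have residue $\Res L_m$ (since $\bi^\T=\bj^\la$) and lie outside $L_1\cup\dots\cup L_{m-1}$ (since those ladders are already filled by $\la(m-1)$). Because each node of $\bB$ is addable to a shape contained in $L_1\cup\dots\cup L_m$, its left or upper neighbour lies in some $L_k$ with $k\le m$, so the node itself lies in a ladder of index at most $m+e-1$; combined with the residue congruence and the lower bound $\ge m$, this pins it to $L_m$. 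That is the whole proof.

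The contrast is instructive. In your top-down direction the nodes $N$ are \emph{removable} $\rho$-nodes of $\la$, and $\la$ may well have such nodes on the earlier ladders $L_{t-e},L_{t-2e},\dots$; this is why you reached for Lemma~\ref{JLambda}. In the paper's bottom-up direction the nodes are \emph{addable} to a small shape, and the ladder geometry alone (no appeal to Lemma~\ref{JLambda}, no dominance comparison) forces them into $L_m$. So your belief that ``ruling them out seems to genuinely require Lemma~\ref{JLambda}'' is an artefact of the direction of induction you chose; reversing it makes the lemma self-contained and avoids importing the harder Lemma~\ref{Tech} through Lemma~\ref{JLambda}. On the other hand, your approach has the minor virtue of making the identity $\bj^\la=(\bj^{\la(t-1)},\rho,\dots,\rho)$ explicit, which is exactly what is used again in Lemma~\ref{Coeff}.
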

\begin{proof}  We apply induction on $m>0$ with the induction base $m=1$ being clear as $r_1=1$. Let $m>1$ and assume that $\sh(T_{\leq R_{m-1}}) = \lambda(m-1)$.  Letting $\bB$ denote the set of nodes in $\sh(T_{\leq R_m}) \setminus \sh(T_{\leq R_{m-1}})$, it suffices to prove that $\bB = \lambda \cap L_m$.  Since $|\bB|=|\lambda \cap L_m|=r_m$, it is enough to prove that $\bB \subseteq \lambda \cap L_m$.  Observing that $\bB$ is contained in $\lambda$ it then remains to show that $\bB \subseteq L_m$.
Observe that
the nodes of $\bB$ must have residue $\res L_m$ since $\bi^{\T} = \bj^{\lambda}$.  We know also that none of the nodes in $\bB$ belong to any of the ladders $L_1,\dots,L_{m-1}$ since $\sh(T_{\leq R_{m-1}}) = \lambda(m-1)$.  We conclude that $\bB \subseteq L_m$, completing the inductive step.
\end{proof}

Let $\lambda \in \cP_d$ and set $t=t_{\lambda}$ and $r_m=r_m(\lambda)$ for $m>0$. We define 
$$r_{\lambda}:=[r_1]_q^!\,  [r_2]_q^!\, \cdots\, [r_t]_q^!.$$

\begin{Lemma}\label{Coeff} If $\lambda \in \cRP_d$  then $\bj^{\lambda}$ has multiplicity $r_{\lambda}$ in $\chq S(\lambda)$.
 
\end{Lemma}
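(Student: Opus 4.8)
The plan is to compute the multiplicity of $\bj^\lambda$ in $\chq S(\lambda)$ by counting, with $q$-weights, the standard $\lambda$-tableaux $\T$ with $\bi^\T = \bj^\lambda$, using Theorem~\ref{Specht}. The key structural input is Lemma~\ref{K,K-Gen}: any such tableau must fill the ladder intersections $\lambda\cap L_1,\lambda\cap L_2,\dots,\lambda\cap L_t$ \emph{one ladder at a time}, in order, so that after entering $R_m = r_1+\dots+r_m$ numbers the shape is exactly $\lambda(m)$. Thus the set of relevant tableaux decomposes as an independent choice, for each $m$, of how to place the numbers $R_{m-1}+1,\dots,R_m$ into the $r_m$ nodes of $\lambda\cap L_m$, subject to standardness within that block (the inter-ladder standardness being automatic since all later ladders lie strictly to the upper-right). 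Since $\bi^\T$ is already forced to equal $\bj^\lambda$ once the shape condition holds (all nodes of $L_m$ have residue $\Res L_m$), the multiplicity is $\sum_\T q^{\deg\T}$ over exactly these tableaux.

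Next I would observe that the nodes of $\lambda\cap L_m$, ordered from bottom to top, say $C_1,\dots,C_{r_m}$ (with $C_1$ lowest), must receive the numbers $R_{m-1}+1,\dots,R_m$ in an order compatible with standardness: reading the block of $r_m$ entries, placing a number in node $C_s$ before $C_{s'}$ for $s>s'$ is allowed, but within a single ladder the nodes lie on a line of slope $1/e$, so any node of $L_m$ is strictly to the upper-right of the node below it in $L_m$; hence for standardness we need each entry to go into a node all of whose left/upper neighbors (within the already-built shape) are filled — and since $\lambda(m-1)$ is already present, the only constraint is the relative order among $C_1,\dots,C_{r_m}$. Here the crucial point is that a node $C_s$ can be filled before $C_{s-1}$ (the one directly below it in the next row down) precisely because the nodes of a ladder are \emph{not} adjacent — they sit in different rows and different columns — so there is genuinely a free choice. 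I would make this precise by checking that filling the $r_m$ ladder nodes in \emph{any} order gives a standard tableau of the partial shape, i.e. the block of $r_m$ insertions can be done in all $r_m!$ orders.

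Then the degree computation: for a removable $i$-node $A$ of a shape, $d_A$ counts addable $i$-nodes below $A$ minus removable $i$-nodes below $A$. When we insert the $s$-th number of the block into ladder $L_m$, the current shape is $\lambda(m-1)$ together with $s-1$ already-placed nodes of $L_m$; I would show that the contribution of each such step, summed appropriately over the $s$-th insertion as $s$ ranges over a fixed ordering, telescopes into a factor $[r_m]_q^!$. Concretely, among the $r_m$ ladder nodes, if the entries are placed so that exactly $a$ of the $r_m - (s-1)$ still-available slots lie below the chosen one, that insertion contributes degree equal to (number of addable $i$-nodes below, all of which are the remaining lower ladder slots or slots created directly below the bottom) minus (removable $i$-nodes below); because the ladder $L_m$ is bottom complete for $\lambda$ (Lemma~\ref{BottomComp}, as $\lambda$ is $e$-restricted), the arithmetic works out so that choosing the $s$-th node to be the $(j+1)$-st from the bottom among the available ones contributes $q$ to a power that, summed over $j=0,\dots,r_m-s$, gives $[r_m-s+1]_q$ up to a uniform shift; multiplying over $s=1,\dots,r_m$ yields $[r_m]_q \cdots [1]_q = [r_m]_q^!$. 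Multiplying the independent contributions over $m=1,\dots,t$ gives $r_\lambda = [r_1]_q^!\cdots[r_t]_q^!$, as claimed.

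The main obstacle I anticipate is the degree bookkeeping in the last paragraph: correctly identifying, at each insertion step, exactly which addable and removable $i$-nodes lie below the node being added, and verifying that the ``extra'' addable node that appears directly beneath the bottom of the ladder (when the bottom row gets extended) contributes in just the right way to convert a naive count $0,1,\dots,r_m-1$ into the balanced $q$-integers $[1]_q,[2]_q,\dots$. This is where $e$-restrictedness and bottom-completeness of the ladder are essential — without them the node directly below could have the wrong residue or already be present — and where one must be careful that $\deg$ as defined in \cite{BKW} is the \emph{unsymmetrized} sum $d_A(\lambda)+\deg(\T_{\le d-1})$, so that the $q$-symmetry of the final answer $[r_m]_q^!$ is a genuine (and reassuring) consequence of the computation rather than an input. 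I would double-check the small cases $r_m = 1$ (contributing $1$, trivially) and $r_m = 2$ (contributing $1+q = [2]_q$, i.e.\ the two orders of inserting the two ladder nodes give degrees $0$ and $1$) to calibrate signs before writing the general telescoping argument.
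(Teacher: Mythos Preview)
Your overall plan coincides with the paper's proof: Lemma~\ref{K,K-Gen} forces the ladder-by-ladder filling, bottom-completeness (Lemma~\ref{BottomComp}) ensures that all $r_m!$ orderings within each ladder yield standard tableaux, and the $m$th ladder contributes a factor $[r_m]_q^!$. The paper phrases this as induction on $d$ (peel off the bottom ladder of $\lambda$, reduce to $\lambda_{\bA}$), but it is the same decomposition, and the paper likewise does not spell out the degree computation for a single ladder.

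Where you go wrong is in that degree computation. Your $r_m=2$ sanity check is off: inserting bottom-then-top gives degrees $0$ then $-1$ (once the bottom node is placed it is a \emph{removable} $i$-node below the top one), while top-then-bottom gives $+1$ then $0$, so the contribution is $q^{-1}+q=[2]_q$, not $1+q$. There is no ``extra addable $i$-node beneath the bottom of the ladder'': since $\lambda(m)$ is itself $e$-restricted, the bottom ladder node $A_1$ lies in the last nonempty row, and the sole addable node in the row below has residue $\neq i$. More seriously, your proposed per-step factorization does not hold as stated, because the degree at step $s$ depends on \emph{which} slots were filled earlier, not merely on the rank of the chosen slot among those remaining. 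The clean argument is global: if the insertion order is $\sigma\in S_{r_m}$ (so $A_{\sigma(s)}$ is placed at step $s$), then at step $s$ every already-filled ladder slot below $A_{\sigma(s)}$ is a removable $i$-node and every still-empty one is an addable $i$-node, with no further $i$-nodes below; summing over $s$, the total degree is $2\,\mathrm{inv}(\sigma)-\binom{r_m}{2}$, and
\[
\sum_{\sigma\in S_{r_m}} q^{\,2\,\mathrm{inv}(\sigma)-\binom{r_m}{2}}
\;=\;q^{-\binom{r_m}{2}}\prod_{k=1}^{r_m}\frac{q^{2k}-1}{q^{2}-1}
\;=\;[r_m]_q^!.
\]
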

\begin{proof} %We use the notation $R_m$ and $\lambda(m)$ from Lemma~\ref{K,K-Gen},  
Assume that $\bi^{\T} = \bj^{\lambda}$.   
Lemma~\ref{K,K-Gen} implies that for any tableau $\T \in \St(\lambda)$ with $\bi^{\T}=\bj^{\lambda}$, the numbers $d,d-1,\dots,d-r_t+1$ must appear in the bottom removable sequence $\bA=\{A_1,\dots,A_{r_t}\}$ for $\lambda$.  Moreover, all $r_t!$ possible permutations of those numbers occur.
Since $\lambda$ is $e$-restricted, by Lemma~\ref{BottomComp} all ladders are bottom complete for $\lambda$, and thus the possible labelings of $\bA$ give a contribution of exactly $[r_t]_q^!$ to the multiplicity of $\bj^{\lambda}$ in $\chq S(\lambda)$. The result follows by induction on $d$.
\end{proof}

\begin{Corollary}\label{CoeffCor} If $\lambda \in \cRP_d$  then $\bj^{\lambda}$ has multiplicity $r_{\lambda}$ in $\chq D(\lambda)$.
\end{Corollary}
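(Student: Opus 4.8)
The plan is to read off the multiplicity of $\bj^{\lambda}$ in $\chq D(\lambda)$ from the unitriangular expansion of $\chq S(\lambda)$ into irreducible graded characters, using Lemma~\ref{JLambda} to annihilate all of the lower terms and Lemma~\ref{Coeff} to compute the Specht side.

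First I would write the expansion $\chq S(\lambda)=\sum_{\nu\in\cRP_d} d_{\lambda,\nu}\,\chq D(\nu)$. By Theorem~\ref{OldFund}, the coefficient $d_{\lambda,\nu}$ vanishes unless $\nu\in M(\lambda)$, in particular unless $\nu\unlhd\lambda$, and $d_{\lambda,\lambda}=1$; hence
$$\chq S(\lambda)=\chq D(\lambda)+\sum_{\nu\in\cRP_d,\ \nu\lhd\lambda} d_{\lambda,\nu}\,\chq D(\nu).$$
Next I would extract the coefficient of $\bj^{\lambda}$ from both sides (as elements of the free $\Z[q,q^{-1}]$-module $\mathscr C$, so these coefficients lie in $\Z[q,q^{-1}]$). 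For every $\nu\in\cRP_d$ with $\nu\lhd\lambda$ we have $\nu\neq\lambda$ and $\nu\unlhd\lambda$, hence $\nu{\not}{\unrhd}\lambda$; applying the ``in particular'' clause of Lemma~\ref{JLambda} with $\mu$ taken to be the $e$-restricted partition $\nu$, we conclude that $\bj^{\lambda}$ does not occur in $\chq D(\nu)$. Therefore all of the summands contribute zero to the coefficient of $\bj^{\lambda}$, so the coefficient of $\bj^{\lambda}$ in $\chq D(\lambda)$ equals the coefficient of $\bj^{\lambda}$ in $\chq S(\lambda)$. By Lemma~\ref{Coeff} the latter is $r_{\lambda}$, which finishes the proof.

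I do not expect a serious obstacle here: once Lemmas~\ref{JLambda} and~\ref{Coeff} are in hand, the corollary is immediate from the unitriangularity of the graded decomposition matrix. The only point requiring care is that both inputs rely on $e$-restrictedness — Lemma~\ref{Coeff} is applied to $\lambda$ itself, and Lemma~\ref{JLambda} is applied to each irreducible $D(\nu)$ with $\nu\in\cRP_d$, $\nu\lhd\lambda$ — and this is exactly the standing hypothesis $\lambda\in\cRP_d$ of the corollary, so nothing further is needed.
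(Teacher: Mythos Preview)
Your argument is correct and is essentially the paper's own proof: use Lemma~\ref{Coeff} for the Specht side, invoke Lemma~\ref{JLambda} to kill the contribution of each $D(\nu)$ with $\nu\lhd\lambda$, and appeal to the unitriangularity (Theorem~\ref{OldFund}) of the decomposition of $S(\lambda)$ into irreducibles. The paper phrases it in terms of composition factors rather than writing out the character expansion, but the content is identical.
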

\begin{proof}By Lemma~\ref{Coeff}, $\bj^{\lambda}$ appears in $\chq S(\lambda)$ with multiplicity $r_{\lambda}$, and by Lemma~\ref{JLambda}, $\bj^{\lambda}$ does not appear in $\chq D(\mu)$ for $\mu \lhd \lambda$.  Since composition factors of $S(\lambda)$ are of the form $D(\mu)$ for $\mu \unlhd \lambda$, the result follows.
\end{proof}

In two special cases we can be more explicit.

\begin{Corollary}\label{ColStd} Let $\lambda$ be a partition of $d$ with $\lambda_1<e$.  Then
$\bj^{\lambda}$ has multiplicity $1$ in $\chq S(\lambda)$ and in $\chq D(\lambda)$.
\end{Corollary}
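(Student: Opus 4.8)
The plan is to reduce the statement to an elementary fact about ladders, after which Lemma~\ref{Coeff} and Corollary~\ref{CoeffCor} finish the job. First I would observe that the hypothesis $\lambda_1<e$ makes $\lambda$ $e$-restricted: for every $r$ we have $\lambda_r-\lambda_{r+1}\le\lambda_r\le\lambda_1<e$. Hence $\lambda\in\cRP_d$, and both Lemma~\ref{Coeff} and Corollary~\ref{CoeffCor} apply, telling us that $\bj^{\lambda}$ occurs with multiplicity $r_{\lambda}=[r_1]_q^!\,[r_2]_q^!\cdots[r_t]_q^!$ in $\chq S(\lambda)$ and in $\chq D(\lambda)$, where $r_m=r_m(\lambda)=|\lambda\cap L_m|$ and $t=t_\lambda$. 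Since $[0]_q^!=[1]_q^!=1$, it suffices to show that $r_m(\lambda)\le 1$ for every $m>0$; this forces $r_\lambda=1$, which is exactly the claim.

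To prove $|\lambda\cap L_m|\le 1$, the key point is to pin down which row of $\lambda$ can meet $L_m$. A node of $L_m$ in row $a$ has the form $(a,b)$ with $a=1+k$ and $b=m-k(e-1)$, and the defining condition $k<m/(e-1)$ of the ladder reads exactly $a-1<m/(e-1)$, i.e. $a<m/(e-1)+1$. On the other hand, if such a node also lies in $\lambda$ then $\lambda_a\ge b$, and since $\lambda_a\le\lambda_1\le e-1$ we get $e-1\ge m-(a-1)(e-1)$, equivalently $a(e-1)\ge m$, i.e. $a\ge m/(e-1)$. Thus any row $a$ in which $\lambda$ meets $L_m$ satisfies $m/(e-1)\le a<m/(e-1)+1$. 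This half-open interval has length $1$, so it contains at most one integer; hence $\lambda\cap L_m$ is contained in a single row, and since the parametrization $(1+k,\,m-k(e-1))$ hits each row at most once, $|\lambda\cap L_m|\le 1$.

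Combining the two steps gives $r_\lambda=1$, and therefore $\bj^{\lambda}$ has multiplicity $1$ in $\chq S(\lambda)$ and in $\chq D(\lambda)$. The degenerate cases require no argument: if $e=0$ the hypothesis $\lambda_1<e$ is vacuous, and if $e=1$ it forces $\lambda=\emptyset$. I do not anticipate any real obstacle; the only thing to execute carefully is the estimate in the middle paragraph — in particular the observation that membership of a node in both $L_m$ and $\lambda$ confines its row to a unit interval — which is precisely where the hypothesis $\lambda_1\le e-1$ enters.
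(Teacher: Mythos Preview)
Your proof is correct and follows exactly the approach of the paper: reduce to Lemma~\ref{Coeff} and Corollary~\ref{CoeffCor}, then observe that $\lambda_1<e$ forces $r_m(\lambda)\le 1$ for all $m$. The paper simply asserts that last observation without justification, whereas you supply a clean argument for it; your verification that $\lambda$ is $e$-restricted is also a detail the paper leaves implicit.
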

\begin{proof}  Observe that $\lambda_1<e$ implies that $r_m \leq 1$ for all $1 \leq m \leq s_{\lambda}$.  The result then follows directly from Lemma~\ref{Coeff} and Corollary~\ref{CoeffCor}.
\end{proof}

\begin{Corollary}\label{ColStdHtP} Let $\lambda = (l_1^{k_1},l_2^{k_2},\dots)$ be an $e$-restricted partition with $l_1=e$ and $l_i>l_{i+1}$ for $i\geq 1$.  Then the multiplicity of $\bj^{\lambda}$ in $\chq S(\lambda)$ and in $\chq D(\lambda)$ is  
$([2]_q)^{k_1}.$
\end{Corollary}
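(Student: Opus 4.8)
The plan is to evaluate the statistic $r_\lambda=[r_1]_q^!\cdots[r_{t_\lambda}]_q^!$ of Lemma~\ref{Coeff} explicitly for the given $\lambda$, and then read off the answer from Lemma~\ref{Coeff} and Corollary~\ref{CoeffCor}. Since $[r]_q^!=1$ for $r\le 1$ while $[2]_q^!=[2]_q$, and since (as we shall see) $r_m(\lambda)\le 2$ for every $m$, this reduces to counting the ladders $L_m$ with $r_m(\lambda)=2$ and showing there are exactly $k_1$ of them.

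First I would record the ladder arithmetic: the node $(a,b)$ lies on $L_{b+(a-1)(e-1)}$, and two nodes on a common ladder whose rows differ by $\delta$ have columns differing by $\delta(e-1)$. So if $(a,b),(a',b')\in\lambda\cap L_m$ with $a<a'$, then $b=b'+(a'-a)(e-1)$, hence $\lambda_a\ge b\ge 1+(a'-a)(e-1)$. If $a>k_1$ then $\lambda_a=l_j\le e-1$ for some $j\ge 2$ (using $l_2<l_1=e$), forcing $a'-a<1$, a contradiction; so the topmost node of any ladder meeting $\lambda$ in at least two nodes sits in a row $a\le k_1$, where $\lambda_a=e$. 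Feeding $\lambda_a=e$ back into the inequality gives $a'-a\le 1$, i.e.\ $a'=a+1$, and then $b=e$, $b'=1$. Applied to an arbitrary second node, this shows that every node of $\lambda\cap L_m$ other than $(a,e)$ must be $(a+1,1)$; thus $r_m(\lambda)\le 2$, with equality precisely when $\lambda\cap L_m=\{(a,e),(a+1,1)\}$ for some $a\le k_1$ with $\lambda_{a+1}\ge 1$.

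It remains to check that, conversely, each $a\in\{1,\dots,k_1\}$ does produce such a ladder. For $a<k_1$ this is immediate since $\lambda_{a+1}=e\ge 1$, so both $(a,e)$ and $(a+1,1)$ lie in $\lambda$. The one point requiring an actual hypothesis is $a=k_1$: if row $k_1+1$ were empty we would have $\lambda_{k_1}-\lambda_{k_1+1}=e$, contradicting $\lambda\in\cRP_d$; hence $\lambda_{k_1+1}\ge 1$ and $(k_1+1,1)\in\lambda$. The ladders containing $(a,e)$ for $a=1,\dots,k_1$ are $L_e,L_{2e-1},\dots,L_{k_1(e-1)+1}$, which are pairwise distinct (as $e\ge 2$), and by the previous paragraph these are exactly the ladders with $r_m(\lambda)=2$. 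Consequently $r_\lambda=([2]_q^!)^{k_1}=([2]_q)^{k_1}$, and Lemma~\ref{Coeff} together with Corollary~\ref{CoeffCor} gives the stated multiplicity of $\bj^\lambda$ in $\chq S(\lambda)$ and in $\chq D(\lambda)$.

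The only real obstacle is combinatorial care: pinning down the ladder indices and the boundary overlap between consecutive rows of length $e$, and noticing that $e$-restrictedness is exactly what excludes the partition $(e^{k_1})$ — for which the count would instead be $k_1-1$ and the corollary would fail.
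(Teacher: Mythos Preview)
Your argument is correct and follows exactly the paper's approach: apply Lemma~\ref{Coeff} and Corollary~\ref{CoeffCor} after verifying that $\lambda$ has precisely $k_1$ ladders of size~$2$ and all others of size at most~$1$. The paper asserts this combinatorial fact in a single sentence, whereas you have supplied the detailed verification (including the observation that $e$-restrictedness is what forces row $k_1+1$ to be nonempty).
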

\begin{proof} Use Lemma~\ref{Coeff} and Corollary~\ref{CoeffCor} and the fact that $\lambda$ has exactly $k_1$ ladders of size $2$, the remaining ladders of $\la$ having size at most $1$. 
\end{proof}

\subsection{Ladder weight multiplicity}
Let $V$ be a finite dimensional graded $H_d$-module. For any
$\lambda \in \cRP_d$ define $m_{\lambda}(V)$ to be the multiplicity of $\bj^{\lambda}$ in $\chq V$.  We collect the important properties of the function $m_{\lambda}$:

\begin{Theorem}\label{MLambda}  Let $\lambda \in \cRP_d$, $\mu \in \cP_d$, and $V$ be a finite dimensional graded $H_d$-module.  Then:

{\rm (i)} $m_{\lambda}(V) \in \Z_{\geq 0}[q,q^{-1}]$;

{\rm (ii)} if $m_{\lambda}(V) = 0$ then $[V:D(\lambda)]=0$;

{\rm (iii)} $m_{\lambda}(S(\lambda))=m_{\lambda}(D(\lambda))=r_{\lambda}$;

{\rm (iv)} $m_{\lambda}(S(\mu))=0$ unless $\lambda \in M(\mu)$; 

{\rm (v)} if $\mu\in \cRP_d$, then $m_{\lambda}(D(\mu))=0$ unless $\lambda \in M(\mu)$;

{\rm (vi)} $\displaystyle m_{\lambda}(S(\mu)) = \sum_{\nu \in \cRP_d \cap M(\mu,\lambda),\ \nu \neq \lambda}{d_{\mu,\nu} m_{\lambda}(D(\nu))} + d_{\mu,\lambda} r_{\lambda}$.
\end{Theorem}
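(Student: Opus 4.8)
The plan is to obtain all six parts by expanding graded characters in the basis of irreducible graded characters and reading off the coefficient of the residue sequence $\bj^\lambda$; recall that by definition $m_\lambda(V)$ \emph{is} this coefficient, i.e.\ $m_\lambda(V)=\qdim V_{\bj^\lambda}$. From this description part~{\rm(i)} is immediate, and part~{\rm(iii)} is exactly the combination of Lemma~\ref{Coeff} and Corollary~\ref{CoeffCor}. For part~{\rm(iv)}, Lemma~\ref{JLambda} already gives $\lambda\unlhd\mu$ whenever $m_\lambda(S(\mu))\neq0$, so it only remains to check the residue-content condition $\lambda\sim\mu$; for this I would note, by a one-line induction on $d$ straight from the recursive definition of $\bj^\lambda$, that the multiset of entries of $\bj^\lambda$ equals the multiset of residues of the nodes of $\lambda$ --- hence if $\bj^\lambda=\bi^{\T}$ for some $\T\in\St(\mu)$ then $\lambda$ and $\mu$ have the same residue content, so $\lambda\in M(\mu)$.

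For part~{\rm(v)} I would use positivity. Since $d_{\mu,\mu}=1$ by Theorem~\ref{OldFund} and every other summand of the expansion $\chq S(\mu)=\sum_{\nu\in\cRP_d}d_{\mu,\nu}\,\chq D(\nu)$ has all coefficients in $\Z_{\geq0}[q,q^{-1}]$ (as $d_{\mu,\nu}$ and $\chq D(\nu)$ do), the difference $\chq S(\mu)-\chq D(\mu)$ has nonnegative coefficients; extracting the coefficient of $\bj^\lambda$ then shows that $m_\lambda(D(\mu))$ vanishes whenever $m_\lambda(S(\mu))$ does, and part~{\rm(iv)} gives $m_\lambda(D(\mu))=0$ unless $\lambda\in M(\mu)$. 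Part~{\rm(vi)} is now purely formal: extracting the coefficient of $\bj^\lambda$ from the same expansion yields $m_\lambda(S(\mu))=\sum_{\nu\in\cRP_d}d_{\mu,\nu}\,m_\lambda(D(\nu))$; Theorem~\ref{OldFund}(i) removes the terms with $\nu\notin M(\mu)$ and part~{\rm(v)} removes those with $\lambda\notin M(\nu)$, so the sum is supported on $\nu\in\cRP_d\cap M(\mu,\lambda)$; splitting off the term $\nu=\lambda$ and substituting $m_\lambda(D(\lambda))=r_\lambda$ from part~{\rm(iii)} produces the asserted identity, the split-off term reading $d_{\mu,\lambda}r_\lambda$ --- which is correct even when $d_{\mu,\lambda}=0$.

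For part~{\rm(ii)} I would use additivity of $\chq$ along a graded composition series of $V$ to write $\chq V=\sum_{\nu\in\cRP_d}c_\nu(q)\,\chq D(\nu)$ with $c_\nu(q)\in\Z_{\geq0}[q,q^{-1}]$ the graded composition multiplicity of $D(\nu)$ in $V$ (so $c_\nu(1)=[V:D(\nu)]$). Extracting the coefficient of $\bj^\lambda$ gives $m_\lambda(V)=\sum_{\nu}c_\nu(q)\,m_\lambda(D(\nu))$, a sum of products of Laurent polynomials with nonnegative coefficients, so if it vanishes then each summand does, in particular $c_\lambda(q)\,m_\lambda(D(\lambda))=0$; since $m_\lambda(D(\lambda))=r_\lambda\neq0$ by part~{\rm(iii)}, this forces $c_\lambda(q)=0$ and hence $[V:D(\lambda)]=c_\lambda(1)=0$. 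I do not anticipate a serious obstacle here: the real content is already carried by the earlier results of this section --- the dominance Lemma~\ref{Tech} feeding into Lemma~\ref{JLambda}, and the ladder-filling count of Lemma~\ref{Coeff} --- and what remains is bookkeeping. The one point I would handle with care is the positivity running through the argument: $m_\lambda(-)$, the numbers $d_{\mu,\nu}$, and the graded composition multiplicities $c_\nu(q)$ all lie in $\Z_{\geq0}[q,q^{-1}]$, so a sum (or product) of them vanishes only if each summand (factor) does; this is exactly what makes $m_\lambda$ a faithful detector of the constituent $D(\lambda)$ in part~{\rm(ii)} and what justifies the coefficientwise comparison in part~{\rm(v)}. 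The background facts used --- additivity of $\chq$ on short exact sequences, existence of graded composition series, and nonnegativity of the $d_{\mu,\nu}$ --- are all recorded earlier in the text.
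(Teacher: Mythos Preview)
Your proof is correct and follows essentially the same route as the paper's: everything is deduced from positivity of $m_\lambda$, the explicit values $m_\lambda(S(\lambda))=m_\lambda(D(\lambda))=r_\lambda$ from Lemma~\ref{Coeff} and Corollary~\ref{CoeffCor}, the vanishing from Lemma~\ref{JLambda}, and the expansion $\chq S(\mu)=\sum_\nu d_{\mu,\nu}\,\chq D(\nu)$. Your direct residue-content argument for the $\lambda\sim\mu$ half of~(iv) is in fact a bit more explicit than the paper's appeal to Theorem~\ref{OldFund} at that spot, but the overall structure is the same.
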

\begin{proof} (i) is clear from the definitions, and (iii) is a restatement of Lemma~\ref{Coeff}  and Corollary~\ref{CoeffCor}. Next, (ii) follows from (i) and (iii). 

To see (iv), assume that $\la\not\in M(\mu)$. Then either $\la\not\sim\mu$ or $\la{\not}{\unlhd}\mu$. In the first case it follows from Theorem~\ref{OldFund} that $m_\la(S(\mu))=0$, and in the second case the same follows from Lemma~\ref{JLambda}. Now (v) follows from (iv) and (i). 

Finally, 
(vi) follows from (i), (iii), (v), and Theorem~\ref{OldFund}
\end{proof}

\section{The algorithm}

 \subsection{Basic Algorithm}\label{BasicAlg}
We give an easy algorithm to solve the following problem which will arise in the Main Algorithm below:

\begin{Problem}\label{PBasicAlg} Suppose $d(q) \in q \Z[q]$, and $m(q),r(q) \in \Z[q,q^{-1}]$ are such that $\overline{m(q)}=m(q)$, $\overline{r(q)}=r(q)$, and $r(q) \neq 0$.  If $d(q)r(q) + m(q)$ is known and $r(q)$ is known, find $d(q)$ and $m(q)$.
\end{Problem}

\begin{Remark}\label{RBasicAlg}{\rm It is easy to see that Problem~\ref{PBasicAlg} has a unique solution.}
\end{Remark}

We now explain an inductive algorithm to solve Problem~\ref{PBasicAlg}.  Clearly, if $d(q)r(q) + m(q)=0$, then $d(q)=m(q)=0$ by uniqueness.
If $d(q)r(q) + m(q)\neq 0$ then we can write it in the form 
$$d(q)r(q) + m(q)=\sum_{n=-N}^M a_nq^n \quad (\text{for}\ -N\leq M \text{~and~} a_{-N}\neq0, a_M\neq 0).$$
Note that the assumptions %fact that $d(q) \in q \Z[q]$ and $m(q)$ and $r(q)$ are bar-invariant implies that 
imply that $M\geq 0$ and $M \geq N$ (but we might have $N<0$ if $m(q)=0$).  
%We define the degree of $d(q)r(q) + m(q)$ to be $M+N$.  
The algorithm proceeds by induction on the pairs of non-negative integers $(M, M+N)$ ordered lexicographically. 
The induction base is the pair $(0,0)$ where $d(q)=0$ and $m(q)=a_0$ by uniqueness.
Let $(M,M+N)>(0,0)$. This implies $M>0$.  We denote the top term of $r(q)$ by $bq^R$, and note that $R \geq 0$ since $r(q)$ is bar-invariant.   
%We give an algorithm to compute $d(q)$ and $m(q)$ by induction on the number of terms in $d(q)r(q) + m(q)$, where the number of terms of a polynomial $\sum_{n\in\Z}a_nq^n\in\Z[q,q^{-1}]$ is just the number of non-zero coefficients $a_n$.  The induction base is when $d(q)r(q) + m(q)=0$, and here $d(q)=m(q)=0$ by uniqueness.  Let $d(q)r(q) + m(q) \neq 0$ and write$$d(q)r(q) + m(q)=\sum_{n=-N}^M a_nq^n$$ for $-N\leq M$ and $a_{-N}\neq0$, $a_M\neq 0$. By assumption, $M\geq 0$ (but we might have $N<0$ if $m(q)=0$). Since $d(q) \in q \Z[q]$ and $m(q)$ and $r(q)$ are bar-invariant, we have $M \geq N$.  
We now consider two cases.

\emph{Case 1:} $M > N$. As $m(q)$ is bar-invariant, the term $a_Mq^M$ must come from $d(q)r(q)$.  Thus $\frac{a_M}{b}q^{M-R}$ is a term in $d(q)$.  Setting $d'(q):=d(q)-\frac{a_M}{b}q^{M-R}$, we are reduced to solving the problem for $d'(q)r(q) + m(q)$ for which $M'<M$.

\emph{Case 2:} $M = N$. As $d(q) \in q \Z[q]$, the term $a_{-N}q^{-N}$ must therefore come from $m(q)$.  Since $m(q)$ is bar-invariant, $a_{-N}q^N$ must be a term in $m(q)$ also.  Setting $m'(q):=m(q)-(a_{-N}q^{-N} + a_{-N}q^N)$, we are reduced to solving the problem for $d(q)r(q) + m'(q)$ for which $M' \leq M$ and $M'+N' < M+N$.

\subsection{Main Algorithm}\label{TheAlg}
From now on we assume that $\cha \bbF = 0$.  If $e=0$, then the Specht modules are irreducible, so we also assume that we are in the interesting case $e>0$, i.e we deal with the case of the Hecke algebra over a field of characteristic zero with parameter a primitive $e^\text{th}$ root of unity.  Under these assumptions, we will describe an algorithm for computing the graded decomposition numbers.

The algorithm relies heavily on Theorem~\ref{NewFund}, which is why we need the assumption $\cha \bbF = 0$. 

Let $\mu \in \cP_d$ and $\lambda \in \cRP_d$. We will compute $d_{\mu,\lambda}$ by induction.  However, this induction requires us to keep track of some extra information. 
%, so we carefully describe exactly how induction proceeds.
By Theorem~\ref{OldFund}(i), $\lambda \not \in M(\mu)$ implies $d_{\mu,\lambda}=0$, so we assume $\lambda \in M(\mu)$.  We now calculate $d_{\mu,\lambda}$ and $m_{\lambda}(D(\mu))$ by induction on the distance $l(\mu,\lambda)$ (of course, $m_{\lambda}(D(\mu))$ only makes sense when $\mu$ is $e$-restricted).  Induction begins when $l(\mu,\la)=0$, hence $\mu=\lambda$, and we have  $d_{\mu,\mu}=1$ by Theorem~\ref{OldFund}(ii) and $m_{\mu}(S(\mu))=r_{\mu}$ by Theorem~\ref{MLambda}.

Let $l(\mu,\la)>0$, so $\mu \rhd \lambda$.  By induction, we know the graded decomposition numbers $d_{\mu,\nu}$ for all $\nu \in \cRP_d \cap M(\mu,\lambda)$ with $\nu \neq \lambda$ and the multiplicities $m_{\lambda}(D(\nu))$ for all $\nu \in \cRP_d \cap M(\mu,\lambda)$ with $\nu \neq \mu$.  To make the inductive step we need to compute $d_{\mu,\lambda}$ and, if $\mu$ is $e$-restricted, $m_{\lambda}(D(\mu))$.

If $\mu$ is not $e$-restricted, then by Theorem~\ref{MLambda}(vi), we have
$$ d_{\mu,\lambda}=\frac{1}{r_{\lambda}}\left(m_{\lambda}(S(\mu)) - \sum_{\nu \in \cRP_d \cap M(\mu,\lambda),\ \nu \neq \lambda}{d_{\mu,\nu} m_{\lambda}(D(\nu))}\right),$$
where all the terms in the right hand side are known by induction and Theorem~\ref{Specht}.

Let $\mu$ be $e$-restricted.  By Theorem~\ref{MLambda}(vi), we have 
$$m_{\lambda}(D(\mu)) + d_{\mu,\lambda} r_{\lambda} = m_{\lambda}(S(\mu)) - 
\sum_{\nu \in \cRP_d \cap M(\mu,\lambda),\ \nu \neq \lambda,\ \nu \neq \mu}{d_{\mu,\nu} m_{\lambda}(D(\nu))},$$
where all terms in the right hand side are known by induction and Theorem~\ref{Specht}.
Note $r_{\lambda}$ is non-zero and bar-invariant, $d_{\mu,\lambda} \in q \Z_{\geq 0}[q]$ by Theorem~\ref{NewFund},  and $m_{\lambda}(D(\mu))$ is bar-invariant by Theorem~\ref{Irred}.  Hence, we are in the assumptions of Problem~\ref{PBasicAlg} with $m(q)= m_{\lambda}(D(\mu))$, $d(q)=d_{\mu,\lambda}$, and $r(q)=r_{\lambda}$.  Now we apply the Basic Algorithm described in Section~\ref{BasicAlg} to calculate $m_{\lambda}(D(\mu))$ and $d_{\mu,\lambda}$ and complete the inductive step.

\begin{Remark}{\rm In our algorithm, to calculate $d_{\mu,\lambda}$ and $m_\la(D(\mu))$ one only ever needs to compute $d_{\nu,\kappa}$ and $m_{\kappa}(D(\nu))$ for pairs $(\nu,\kappa)$ such that $l(\nu,\kappa) < l(\mu,\lambda)$ and $\nu,\kappa \in M(\mu,\lambda)$.}
\end{Remark}

\section{Connection to LLT}\label{SLLT}

\subsection{Grothendieck groups} % and the Fock space} 
Here we recast some notions from \cite{LLT} and \cite{BK2}.  Let $[\gmod{H_d}]$ denote the Grothendieck group of the category of finite dimensional graded $H_d$-modules.  This is a free $\Z[q,q^{-1}]$-module with basis $\{[D(\la)] \mid \la \in \cRP_d\}$. 
%(the classes of irreducible graded $H_d$-modules) 
For each $\la\in \cRP_d$, let $P(\la)$ be the projective cover of $D(\la)$; in particular there exists a degree preserving surjection $P(\la) \twoheadrightarrow D(\la)$.  %There is also a natural pairing of $[\gproj{H_d}]$ and $[\gmod{H_d}]$ defined by $([P], [M]) = \qdim \HOM_{H_d}(P,M)$ for $P\in\gproj{H_d}$ and $M\in \gmod{H_d}$. 
%Denote $[\gmod{H}]=\bigoplus_{d\geq 0} [\gmod{H_d}]$. 
% Extend the pairing to $[\gproj{H}] \times [\gmod{H}]$ so that $([\gproj{H_c}], [\gmod{H_d}]) = 0$ for $c \neq d$.  

%For each $i \in I$, there is a linear operator $F_i$ on $[\gmod{H}]$ such that 
%$$F_i [S(\la)] := \sum_B q^{-d^B} [S(\la^B)],$$ where the sum is taken over all addable $i$-nodes $B$ of $\la$. 
By definition of the graded decomposition numbers $d_{\mu,\la}$, for every $\mu\in \cP_d$, we have $[S(\mu)] = \sum_{\la \in \cRP_d} d_{\mu,\la} [D(\la)]$  in $[\gmod{H_d}]$. Moreover, 
from \cite[Theorem 3.14, Theorem 5.13]{BK2} it follows that in $[\gmod{H_d}]$ we have:
\begin{equation}\label{E1}
%[S(\mu)] = \sum_{\la \in \cRP_d} d_{\mu,\la} [D(\la)] \text{\hspace{.5 cm} and \hspace{.5 cm}} 
[P(\la)] = \sum_{\mu \in \cP_d} d_{\mu,\la} [S(\mu)]\qquad(\la \in \cRP_d).
\end{equation}
%These can be interpreted as (graded) Brauer reciprocity.

%For more complete treatment of the following see \cite{BK2}.  
The \emph{Fock space} $\F$ is a $U_q(\hat{\mathfrak{sl}_e})$-module with $\Q(q)$-basis $\{\mu \mid \mu \in \bigoplus_{d \geq 0} \cP_d\}$.  The submodule of $\F$  generated by the vector $\emptyset$ (corresponding to the empty partition) is the irreducible highest weight module $V(\Lambda_0)$. There is a canonical $U_q(\hat{\mathfrak{sl}_e})$-module homomorphism $\pi:\F \twoheadrightarrow V(\Lambda_0)$, see \cite[(3.29)]{BK2}. We can and always will identify $\bigoplus_{d\geq 0}[\gmod{H_d}]\otimes_{\Z[q,q^{-1}]}\Q(q)=V(\La_0)\subset \F$.  Under this identification, we have  $\pi(\mu) = [S(\mu)]$ for each $\mu$.

\subsection{Projective $H_d$-modules}
For each $\bi\in I^d$ there is a unique idempotent $e(\bi)\in H_d$ (possibly zero) such that 
$e(\bi)V=V_\bi$ for any finite dimensional $H_d$-module, see e.g. \cite{BK3}.
% and the corresponding projective module $H_d e(\bj^\la)$.

\begin{Lemma}\label{He} If $\la \in \cRP_d$, then $\displaystyle H_d e(\bj^\la)=\bigoplus_{\mu \in \cRP_d} m_\la(D(\mu)) P(\mu)$.
\end{Lemma}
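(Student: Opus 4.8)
The plan is to identify the projective module $H_d e(\bj^\la)$ by computing its class in the Grothendieck group $[\gmod{H_d}]$ and using that graded projective modules are determined up to isomorphism by their classes (together with the fact that they carry no grading shift ambiguity once we fix the normalization coming from the surjections $P(\mu)\twoheadrightarrow D(\mu)$). Since $\{[P(\mu)]\mid \mu\in\cRP_d\}$ is a $\Z[q,q^{-1}]$-basis of $[\gmod{H_d}]$ dual to the basis $\{[D(\mu)]\}$ under the Cartan pairing, it suffices to show that the coefficient of $[P(\mu)]$ in $[H_d e(\bj^\la)]$ equals $m_\la(D(\mu))$ for each $\mu\in\cRP_d$.

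First I would recall the standard identity for the graded composition multiplicities of a weight-space projective: for any $\bi\in I^d$ and any finite dimensional graded $H_d$-module $V$, the graded multiplicity $[V_\bi:\text{as a graded vector space}]$ equals the coefficient extracted from $\chq V$ at $\bi$; dually, $\hom_{H_d}(H_d e(\bi), V)\cong e(\bi)V=V_\bi$ as graded vector spaces, so $\qdim\hom_{H_d}(H_d e(\bi),V)=\qdim V_\bi$. Applying this with $V=D(\mu)$ and $\bi=\bj^\la$ gives $\qdim\hom_{H_d}(H_d e(\bj^\la),D(\mu))=\qdim D(\mu)_{\bj^\la}=m_\la(D(\mu))$ by the very definition of $m_\la$. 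On the other hand, if $H_d e(\bj^\la)\cong\bigoplus_{\mu}c_\mu(q)P(\mu)$ for some $c_\mu(q)\in\Z_{\ge0}[q,q^{-1}]$ (every projective decomposes this way, with $c_\mu(q)$ recording multiplicities and grading shifts), then $\qdim\hom_{H_d}(H_d e(\bj^\la),D(\mu))=c_\mu(q)\,\qdim\hom_{H_d}(P(\mu),D(\mu))=c_\mu(q)$, using that $P(\mu)$ is the projective cover of $D(\mu)$ with the normalization making $\hom_{H_d}(P(\mu),D(\nu))=\delta_{\mu\nu}$ (in degree $0$, with no shifts). Comparing the two computations yields $c_\mu(q)=m_\la(D(\mu))$, which is exactly the claim.

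Alternatively, and perhaps more in the spirit of the surrounding Grothendieck-group formalism, one can argue entirely inside $[\gmod{H_d}]$: the class $[H_d e(\bj^\la)]$ is the sum over $\mu\in\cRP_d$ of $(\text{coeff of }\bj^\la\text{ in }\chq D(\mu))\cdot[P(\mu)]$, because pairing $[H_d e(\bj^\la)]$ with $[D(\mu)]$ under the Euler/Cartan form picks out $\qdim e(\bj^\la)D(\mu)=m_\la(D(\mu))$, and $\{[P(\mu)]\}$ is the dual basis to $\{[D(\mu)]\}$. Then one invokes that a finitely generated graded projective $H_d$-module is determined up to isomorphism by its class in $[\gmod{H_d}]$ (true since the Grothendieck group of the projectives injects into $[\gmod{H_d}]$ via the Cartan map, which is here an isomorphism after base change, and the $P(\mu)$ form a basis), to upgrade the equality of classes to an isomorphism of modules.

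The main obstacle I anticipate is bookkeeping the grading shifts correctly: I must be sure that $H_d e(\bj^\la)$ decomposes as an honest direct sum $\bigoplus_\mu m_\la(D(\mu))P(\mu)$ with the coefficients $m_\la(D(\mu))\in\Z_{\ge0}[q,q^{-1}]$ interpreted as "multiplicity-with-shifts" (so a monomial $q^k$ contributes a copy of $P(\mu)\langle k\rangle$), and that the normalization of each $P(\mu)$ is fixed so that $\hom_{H_d}(P(\mu),D(\mu))$ is one-dimensional concentrated in degree $0$. Once the conventions from \cite{BK2} on graded projective covers and graded duality are pinned down, the argument is a short pairing computation; the only real content is the identity $\qdim e(\bj^\la)D(\mu)=m_\la(D(\mu))$, which is immediate from the definition of $m_\la$ and the definition of $\chq$.
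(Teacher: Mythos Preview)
Your proposal is correct and follows essentially the same route as the paper: compute the graded multiplicity of $P(\mu)$ in $H_d e(\bj^\la)$ as $\qdim \HOM_{H_d}(H_d e(\bj^\la), D(\mu)) = \qdim e(\bj^\la) D(\mu) = m_\la(D(\mu))$. The paper's proof is just the terse version of your first argument; your additional remarks on the normalization of $P(\mu)$ and the alternative Grothendieck-group phrasing are fine but not needed.
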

\begin{proof} If $M$ and $N$ are graded $H_d$-modules, we denote by $\HOM_{H_d}(M,N)$ the graded vector space which consists of {\em all}, and not necessarily homogeneous, $H_d$-homomorphisms from $M$ to $N$. Now, the graded multiplicity of $P(\mu)$ in $H_d e(\bj^\la)$ is equal to 
$$\qdim \HOM_{H_d}(H_d e(\bj^\la), D(\mu))=\qdim e(\bj^\la) D(\mu)= m_\la(D(\mu)),$$ as required.
\end{proof}

\begin{Corollary}\label{CHe} For $\la \in \cRP_d$, we have in $[\gmod{H_d}]$:

{\rm(i)} $[H_d e(\bj^\la)] = \sum_{\mu \in \cRP_d} m_\la(D(\mu)) [P(\mu)]$,

{\rm(ii)} $[H_d e(\bj^\la)] = \sum_{\mu \in \cP_d} m_\la(S(\mu)) [S(\mu)]$,

{\rm(iii)} $[H_d e(\bj^\la)] = \sum_{\mu \in \cRP_d} m_\la(P(\mu)) [D(\mu)]$.
\end{Corollary}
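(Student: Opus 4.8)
The plan is to prove the three identities in Corollary~\ref{CHe} as straightforward consequences of Lemma~\ref{He} together with the two basis-change formulas available in $[\gmod{H_d}]$: the definition $[S(\mu)]=\sum_{\la\in\cRP_d}d_{\mu,\la}[D(\la)]$ and the dual formula \eqref{E1}, namely $[P(\la)]=\sum_{\mu\in\cP_d}d_{\mu,\la}[S(\mu)]$.

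For part (i): Lemma~\ref{He} gives a genuine decomposition of the module $H_d e(\bj^\la)$ as a direct sum of projectives, so passing to the Grothendieck group immediately yields $[H_d e(\bj^\la)]=\sum_{\mu\in\cRP_d}m_\la(D(\mu))[P(\mu)]$. For part (ii), I would substitute \eqref{E1} into (i), getting
$$[H_d e(\bj^\la)]=\sum_{\mu\in\cRP_d}m_\la(D(\mu))\sum_{\nu\in\cP_d}d_{\nu,\mu}[S(\nu)]=\sum_{\nu\in\cP_d}\Bigl(\sum_{\mu\in\cRP_d}d_{\nu,\mu}\,m_\la(D(\mu))\Bigr)[S(\nu)],$$
and then I would recognize the inner sum. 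By Theorem~\ref{MLambda}(vi) (combined with the fact that $m_\la(D(\mu))=r_\la$ when $\mu=\la$, from Theorem~\ref{MLambda}(iii)) we have exactly $m_\la(S(\nu))=\sum_{\mu\in\cRP_d}d_{\nu,\mu}\,m_\la(D(\mu))$, once one checks that the terms with $\mu\notin M(\nu,\la)$ contribute zero (they do, since $d_{\nu,\mu}=0$ unless $\mu\in M(\nu)$ by Theorem~\ref{OldFund}(i), and $m_\la(D(\mu))=0$ unless $\la\in M(\mu)$ by Theorem~\ref{MLambda}(v), so surviving $\mu$ lie in $M(\nu,\la)$). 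This gives (ii). For part (iii), the cleanest route is to expand each $[P(\mu)]$ in (i) in terms of the $[D(\kappa)]$: by definition of $m_\kappa(P(\mu))$ as the multiplicity of $D(\kappa)$ in $P(\mu)$, we have $[P(\mu)]=\sum_{\kappa\in\cRP_d}m_\kappa(P(\mu))[D(\kappa)]$, but more directly, since $m_\kappa$ is additive on $[\gmod{H_d}]$ (it is just the coefficient-extraction functional $V\mapsto$ multiplicity of $\bj^\kappa$ in $\chq V$), applying $m_\kappa$ to the decomposition of Lemma~\ref{He} gives $m_\kappa(H_d e(\bj^\la))=\sum_{\mu\in\cRP_d}m_\la(D(\mu))\,m_\kappa(P(\mu))$. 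Alternatively and most symmetrically, one observes that the function $\mu\mapsto m_\mu(P(\la))$ records the multiplicities expressing $[H_d e(\bj^\la)]$ in the basis $\{[D(\mu)]\}$; that is, $m_\mu(P(\la))$ is by definition the coefficient of $[D(\mu)]$ in $[H_d e(\bj^\la)]$ read off via $\chq$, which is precisely the assertion (iii).

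There is essentially no obstacle here — each part is a one- or two-line Grothendieck-group manipulation. The only point requiring a small amount of care is the index-set bookkeeping in (ii): one must confirm that replacing the sum $\sum_{\nu\in\cRP_d\cap M(\mu,\la),\ \nu\neq\la}$ appearing in Theorem~\ref{MLambda}(vi) by the full sum $\sum_{\mu\in\cRP_d}$ (and separating out the $\mu=\la$ term with its coefficient $r_\la=m_\la(D(\la))$) does not change the value, which follows from the vanishing statements Theorem~\ref{OldFund}(i), Theorem~\ref{MLambda}(v), and the observation that $d_{\nu,\nu}=1$ handles the diagonal correctly. I would write the argument as: prove (i) directly from Lemma~\ref{He}; derive (ii) by substituting \eqref{E1} and applying Theorem~\ref{MLambda}(vi); and derive (iii) by applying the (additive) functional $m_\mu(-)$ to the decomposition in Lemma~\ref{He}, or equivalently by noting it is the $[D(\mu)]$-coefficient of the left-hand side by definition of $m_\mu$.

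\begin{proof}
By Lemma~\ref{He} we have an isomorphism of graded $H_d$-modules $H_d e(\bj^\la)\cong\bigoplus_{\mu\in\cRP_d}m_\la(D(\mu))\,P(\mu)$, and passing to the Grothendieck group yields (i).

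For (ii), substitute \eqref{E1} into (i):
$$[H_d e(\bj^\la)]=\sum_{\mu\in\cRP_d}m_\la(D(\mu))\sum_{\nu\in\cP_d}d_{\nu,\mu}[S(\nu)]=\sum_{\nu\in\cP_d}\Biggl(\sum_{\mu\in\cRP_d}d_{\nu,\mu}\,m_\la(D(\mu))\Biggr)[S(\nu)].$$
Fix $\nu\in\cP_d$. In the inner sum, a term is nonzero only if $d_{\nu,\mu}\neq0$ and $m_\la(D(\mu))\neq0$; by Theorem~\ref{OldFund}(i) the former forces $\mu\in M(\nu)$, and by Theorem~\ref{MLambda}(v) the latter forces $\la\in M(\mu)$, so $\mu\in M(\nu,\la)$. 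Separating the term $\mu=\la$, for which $d_{\nu,\la}$ is retained and $m_\la(D(\la))=r_\la$ by Theorem~\ref{MLambda}(iii), we get
$$\sum_{\mu\in\cRP_d}d_{\nu,\mu}\,m_\la(D(\mu))=\sum_{\substack{\mu\in\cRP_d\cap M(\nu,\la)\\ \mu\neq\la}}d_{\nu,\mu}\,m_\la(D(\mu))+d_{\nu,\la}\,r_\la=m_\la(S(\nu)),$$
the last equality being Theorem~\ref{MLambda}(vi). This proves (ii).

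For (iii), the function $V\mapsto m_\mu(V)$ is the $\Z[q,q^{-1}]$-linear functional on $[\gmod{H_d}]$ extracting the multiplicity of $\bj^\mu$ in $\chq V$. Applying it to the decomposition of Lemma~\ref{He} gives $m_\mu(H_d e(\bj^\la))=\sum_{\kappa\in\cRP_d}m_\la(D(\kappa))\,m_\mu(P(\kappa))$, which is the coefficient of $[D(\mu)]$ in $[H_d e(\bj^\la)]$ expressed in the basis $\{[D(\mu)]\mid\mu\in\cRP_d\}$. Hence $[H_d e(\bj^\la)]=\sum_{\mu\in\cRP_d}m_\la(P(\mu))[D(\mu)]$, proving (iii).
\end{proof}
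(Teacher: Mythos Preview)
Parts (i) and (ii) are correct and match the paper's approach. For (ii) the paper simply uses additivity of $m_\la$ on composition series to identify $\sum_{\mu}d_{\nu,\mu}\,m_\la(D(\mu))=m_\la(S(\nu))$ directly, whereas you route through Theorem~\ref{MLambda}(vi) with explicit index bookkeeping; either way is fine.

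Your argument for (iii), however, has a genuine gap. You write that $m_\mu(H_d e(\bj^\la))$ ``is the coefficient of $[D(\mu)]$ in $[H_d e(\bj^\la)]$'', and in the planning paragraph you describe $m_\kappa(P(\mu))$ as ``the multiplicity of $D(\kappa)$ in $P(\mu)$''. Neither is true: by definition $m_\mu(V)$ is the multiplicity of the \emph{weight} $\bj^\mu$ in $\chq V$, not the graded composition multiplicity $[V:D(\mu)]_q$. These differ in general, since $m_\mu(V)=\sum_\nu [V:D(\nu)]_q\,m_\mu(D(\nu))$ and the off-diagonal $m_\mu(D(\nu))$ need not vanish. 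So the displayed computation $m_\mu(H_d e(\bj^\la))=\sum_\kappa m_\la(D(\kappa))\,m_\mu(P(\kappa))$ is correct but irrelevant: it gives neither the $[D(\mu)]$-coefficient of $[H_d e(\bj^\la)]$ nor the quantity $m_\la(P(\mu))$ that (iii) requires.

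The paper instead derives (iii) from (ii) by the same substitution trick used for (ii): expand each $[S(\mu)]=\sum_\nu d_{\mu,\nu}[D(\nu)]$, interchange sums, and then recognize the inner coefficient $\sum_{\mu\in\cP_d}d_{\mu,\nu}\,m_\la(S(\mu))$ as $m_\la(P(\nu))$ by applying the additive functional $m_\la$ to \eqref{E1}. That one line closes the argument; your route via $m_\mu$ would need, at minimum, the symmetry $[P(\nu):D(\mu)]_q=[P(\mu):D(\nu)]_q$ of the graded Cartan matrix, which you do not invoke.
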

\begin{proof} (i) is a restatement of Lemma~\ref{He} in the Grothendieck group.  (ii) follows from (i) and (\ref{E1}) since:

$$\sum_{\mu \in \cRP_d} m_\la(D(\mu)) [P(\mu)] =\sum_{\mu \in \cRP_d} m_\la(D(\mu)) \sum_{\nu \in \cP_d} d_{\nu,\mu} [S(\nu)]$$
$$= \sum_{\nu \in \cP_d} \left(\sum_{\mu \in \cRP_d}m_\la(D(\mu)) d_{\nu,\mu} \right) [S(\nu)] = \sum_{\nu \in \cP_d} m_\la(S(\nu)) [S(\nu)].$$
Similarly, (iii) follows from (ii) and (\ref{E1}) since:
$$\sum_{\mu \in \cP_d} m_\la(S(\mu)) [S(\mu)] =\sum_{\mu \in \cP_d} m_\la(S(\mu)) \sum_{\nu \in \cRP_d} d_{\mu,\nu} [D(\nu)]$$
$$= \sum_{\nu \in \cRP_d} \left(\sum_{\mu \in \cP_d}m_\la(S(\mu)) d_{\mu,\nu} \right) [D(\nu)] = \sum_{\nu \in \cRP_d} m_\la(P(\nu)) [S(\nu)].$$
This completes the proof.
\end{proof}

%Given any $e$-restricted $\la \in \cRP_d$ with $t=t_\la$ and $r_m=r_m(\la)$ for $m>0$, we define
%$$F_\la:=F_{t-1}^{r_t} F_{t-2}^{r_{t-1}} \cdots F_{1}^{r_2} F_{0}^{r_1}.$$
%Observe that the residue sequence formed by taking the lower indices of $F_\la$ in reverse order is $(0^{r_1}, 1^{r_2}, \dots, t-1^{r_t})$ which is exactly $\bj^\la$.  Following \cite{LLT} we define $A(\la) = F_\la \emptyset$ for each $\la \in \cRP_d$.  It is easy to see that $F_\la \emptyset$ exactly produces the set of standard tableaux (of any shape) with corresponding residue sequence $\bj^\la$.

The element $[H_de(\bj^\la)]$ is connected to the `first approximation' $A(\la)$ from  \cite{LLT} as follows. It is easy to check that $A(\la)=\frac{1}{r_\la}\sum_{\mu \in \cP_d} m_\la(S(\mu)) \mu \in \F$, and thus $\pi(r_\la A(\la)) = [H_de(\bj^\la)]$ using Corollary~\ref{CHe}(ii).  This gives an explicit connection between the formal sum $A(\la) \in \F$  and the representation theory of $H_d$.

%\begin{Lemma}\label{ALambda} Let $\la \in \cRP_d$.  Then $A(\la) = [H_d e(\bj^\la)]$.
%\end{Lemma}
%\begin{proof} By Lemma~\ref{He}, (\ref{E1}), and Theorem~\ref{MLambda}(vi) we have the following:
%\begin{align*} [H_d e(\bj^\la)] &= \sum_{\mu \in \cRP_d} m_\la(D(\mu)) [P(\mu)]\\
%&=\sum_{\mu \in \cRP_d} m_\la(D(\mu)) \sum_{\nu \in \cP_d} d_{\nu,\mu} [S(\nu)]\\
%&=\sum_{\mu \in \cRP_d} \sum_{\nu \in \cP_d} m_\la(D(\mu))  d_{\nu,\mu} [S(\nu)]\\
%&= \sum_{\nu \in \cP_d} \left(\sum_{\mu \in \cRP_d}m_\la(D(\mu)) d_{\nu,\mu} \right) [S(\nu)]\\
%&= \sum_{\nu \in \cP_d} m_\la(S(\nu)) [S(\nu)] = A(\la).
%\end{align*} 
%\end{proof}

\subsection{Comparing the algorithms}
Consider the following system of equations over $\Z[q,q^{-1}]$: 
\begin{equation}\label{system}
m_\la(S(\mu)) = \sum_{\nu \in \cRP_d} d_{\mu,\nu} m_\la(D(\nu))  \quad (\la \in \cRP_d, \: \mu \in \cP_d),
\end{equation}
with unknowns $d_{\mu,\nu}$ and $m_\la(D(\nu))$ for $\mu \in \cP_d$ and $\la,\nu \in \cRP_d$.  Note that $m_\la(S(\mu))$ are known from Theorem~\ref{Specht}.

The algorithm described in section~\ref{TheAlg} allows us to solve the system~(\ref{system}) and relies on the fact that it has a unique solution under the following conditions:
\begin{enumerate}
\item[{\rm (i)}] all $m_\la(D(\nu)) \in \Z[q,q^{-1}]$ are bar-invariant and $m_\la(D(\la))=r_\la$;

\item[{\rm (ii)}] $d_{\mu,\nu} = 0$ unless $\nu \unlhd \mu$, $d_{\nu,\nu}=1$, and $d_{\mu,\nu} \in q\Z[q]$ for $\mu \neq \nu$.
\end{enumerate}

%To compute the rows of the decomposition matrix, 
It turns out that the LLT algorithm also relies on being able to solve the system~(\ref{system}).  In \cite{LLT}, the vector $A(\la)$ for $\la \in \cRP_d$ is a first approximation to the canonical basis element $G(\la)$, which calculates a \emph{column} of the decomposition matrix: $G(\la) = \sum_{\mu \in \cP_d} d_{\mu,\la} \mu$. 
% Since the set $\{G(\la) \mid \la \in \cRP_d\}$ forms a basis for their vector space, o
One may write $r_\la A(\la)=\sum_{\nu \in \cRP_d} b_{\la,\nu} G(\nu)$ for some bar-invariant coefficients $b_{\la,\nu}$. % But $G(\nu) = \sum_{\mu \in \cP_d} d_{\mu,\nu} \mu$ for each $\nu \in \cRP_d$.  
Then:
$$\sum_{\mu \in \cP_d} m_\la(S(\mu)) \mu = \sum_{\nu \in \cRP_d} b_{\la,\nu}\sum_{\mu \in \cP_d} d_{\mu,\nu} \mu.$$
Fixing $\mu$ (i.e. fixing a \emph{row} of the matrix) we are left with solving
$$m_\la(S(\mu)) = \sum_{\nu \in \cRP_d} b_{\la,\nu}d_{\mu,\nu}$$
for each $\mu \in \cP_d$ and $\la \in \cRP_d$.  Since conditions analogous to (i) and (ii) are known to hold in the Lascoux-Leclerc-Thibon setup \cite{LLT}, we are left with solving the same system of equations under the same conditions.  Moreover, $b_{\la,\nu}$ appearing in the LLT algorithm can now be interpreted as the graded weight space multiplicities $m_\la(D(\nu))$ for each $\la, \nu \in \cRP_d$. %Thus the LLT algorithm leaves the same system of equations to solve.


\begin{thebibliography}{ABC}

\bibitem[A]{Ariki}
S. Ariki, On the decomposition numbers of the Hecke algebra of $G(m,1,n)$, {\em J. Math. Kyoto Univ.} {\bf 36} (1996), 789--808. 

\bibitem[BK1]{BK}
J. Brundan and A. S. Kleshchev, Representation Theory of Symmetric Groups and their Double Covers, in: {\em Groups, Combinatorics, and Geometry $($Durham, 2001$)$}, pp.31--53, World Scientific Publishing, River Edge, NJ, 2003.

\bibitem[BK2]{BK3}
J. Brundan and A. S. Kleshchev, Blocks of cyclotomic Hecke algebras and Khovanov-Lauda algebras, {\em Invent. Math.}, to appear. 


\bibitem[BK3]{BK2}
J. Brundan and A. S. Kleshchev, Graded decomposition numbers for cyclotomic Hecke algebras, {\em Adv. in Math.}, to appear.


\bibitem[BKW]{BKW}
J. Brundan, A. S. Kleshchev and W. Wang, Graded Specht Modules, {\tt arXiv:0901.0218}.


\bibitem[DJ]{DJ}
R. Dipper and G.D. James, Representations of Hecke algebras of general linear groups, \emph{Proc. London Math Soc.}, \textbf{52} (1986), 20--52.

\bibitem[DJM]{DJM}
R. Dipper, G.D. James, and A. Mathas, Cyclotomic $q$-Schur algebras, \emph{Math. Z.} \textbf{229} (1998), 385--416

\bibitem[G]{G}
I. Grojnowski, Affine $\mathfrak{sl}_p$ controls the representation theory of the symmetric group and related Hecke algebras, {\tt arXiv:math.RT/9907129}

\bibitem[J1]{JamesBook}
G.D. James, \emph{The Representation Theory of the Symmetric Groups}, Lecture Notes in Mathematics 682, Springer, Berlin, 1978.  

\bibitem[J2]{James}
G. D. James, On the decomposition matrices of the symmetric groups. II, {\em J. Algebra} {\bf 43} (1976), 45--54.

\bibitem[J3]{James3}
G.D. James, Represntations of the symmetric groups over the field of order 2, {\em J. Algebra} {\bf 38} (1976), 280--308.

\bibitem[K]{K}
A. Kleshchev, \emph{Linear and Projective Representations of Symmetric Groups}, Cambridge University Press, Cambridge, 2005.

\bibitem[KL]{KL1}
M. Khovanov and A. Lauda, A diagrammatic approach to categorification of quantum groups I; {\tt arXiv:0803:4121}.

\bibitem[KS]{KS}
A. S. Kleshchev and J. Sheth, On extensions of simple modules over symmetric and algebraic groups, {\em J. Algebra} {\bf 221} (1999), 705--722.

\bibitem[LLT]{LLT}
A. Lascoux, B. Leclerc, and J.-Y. Thibon, Hecke algebras at roots of unity and crystal bases of quantum affine algebras, \emph{Commun. Math. Phys.} {\bf 181} (1996), 205--263.

\bibitem[L]{L}
S. Lyle, Some results obtained by application of the LLT algorithm, \emph{Communications in Algebra} {\bf 34} (2006), 1723--1752.

\end{thebibliography}
\end{document}